\DeclareMathOperator{\Diff}{Diff}
\newcommand{\ko}{\: , \;}
\newcommand{\bt}{\bullet}
\newcommand{\del}{\partial}
\newcommand{\T}{\mathcal{T}}
\newcommand{\TTT}{\widetilde{\mathcal{T}}}
\newcommand{\zg}{\gamma}
\newcommand{\za}{\alpha}
\newcommand{\BB}{\mathbb{B}}
\newcommand{\A}{\mathcal{A}}
\newcommand{\PP}{\mathbb{P}}
\newcommand{\x}{\mathbf{x}}
\newcommand{\xx}{\mathbf{x}}
\newcommand{\F}{\mathcal{F}}
\newcommand{\C}{\mathbb{C}}
\newcommand{\R}{\mathbb{R}}
\newcommand{\Z}{\mathbb{Z}}
\newcommand{\CC}{\mathcal{C}}
\newcommand{\overunder}[2]{
\!\begin{array}{c}
\scriptstyle{#1}\\[-.1in]
-\!\!\!-\!\!\!-\\[-.1in]
\scriptstyle{#2}
\end{array}
\!
}
\def\Xcal{\mathcal{X}}
\def\Acal{\mathcal{A}}
\def\Fcal{\mathcal{F}}
\def\Q{\mathbb{Q}}
\def\QQ{\mathbb{Q}}
\def\ZZ{\mathbb{Z}}
\def\TT{\mathbb{T}}
\def\Trop{\operatorname{Trop}}
\def\yy{\mathbf{y}}
\def\vv{\mathbf{v}}
\newtheorem{theorem}{Theorem}[section]
\newtheorem{lemma}[theorem]{Lemma}
\theoremstyle{definition}
\newtheorem{definition}[theorem]{Definition}
\newtheorem{example}[theorem]{Example}
\newtheorem{exercise}[theorem]{Exercise}
\newtheorem{conjecture}[theorem]{Conjecture}
\newtheorem{prop}[theorem]{Proposition}
\theoremstyle{remark}
\newtheorem{remark}[theorem]{Remark}
\numberwithin{equation}{section}
\begin{document}

\title{Cluster algebras: an introduction}


\author{Lauren K. Williams}
\address{Department of Mathematics, University of California,
Berkeley, CA 94720}
\curraddr{}
\email{williams@math.berkeley.edu}
\thanks{The author is partially supported by a Sloan Fellowship
and an NSF Career award.}

\subjclass[2010]{13F60, 30F60, 82B23, 05E45}

\date{}

\dedicatory{Dedicated to Andrei Zelevinsky on the
occasion of his $60$th birthday}

\begin{abstract}
Cluster algebras are commutative rings with a set of distinguished
generators having a remarkable combinatorial structure. They were introduced
by Fomin and Zelevinsky in 2000 in the context of Lie theory, but
have since appeared in many other contexts, from Poisson geometry to
triangulations of surfaces and Teichm\"uller theory.
In this expository paper we give an introduction to cluster algebras,
and illustrate how this framework naturally arises in Teichm\"uller theory.
We then sketch how the theory of cluster algebras
led to a proof of the Zamolodchikov periodicity conjecture in
mathematical physics.
\end{abstract}

\maketitle
\setcounter{tocdepth}{1}
\tableofcontents

\section{Introduction}\label{intro}

Cluster algebras were conceived by Fomin and Zelevinsky \cite{FZ1}
in the spring of 2000 as a tool for studying total positivity and 
dual canonical bases in Lie theory.  However, the theory of cluster algebras
has since taken on a life of its own, as connections and applications have
been discovered to diverse areas of mathematics including
quiver representations, Teichm\"uller theory,
tropical geometry, integrable systems, and Poisson geometry.  

In brief, a {\it cluster algebra} $\A$ of rank $n$ is a subring of an
{\it ambient field} $\F$ of rational functions
in $n$ variables.  Unlike ``most" commutative rings, a cluster
algebra is not presented at the outset via a complete set of 
generators and relations.  Instead, from the initial data of 
a \emph{seed} -- which includes $n$ distinguished 
generators called \emph{cluster variables}
plus an \emph{exchange matrix} -- one uses an iterative procedure
called \emph{mutation} to produce  the rest of the 
cluster variables. 
In particular, each new cluster variable is a rational expression
in the initial cluster variables.  The cluster algebra is then defined to be 
the subring of $\F$ generated by all cluster variables.

The set of cluster variables has a remarkable combinatorial structure:
this set is a union of overlapping
algebraically independent $n$-subsets of $\F$ called {\it clusters},
which together have the structure of a simplicial complex 
called the {\it cluster
complex}.  
The clusters are related to each other by birational transformations
of the following kind: for every cluster $\x$
and every cluster variable $x\in \x$, there is another cluster
$\x' = (\x - \{x\}) \cup \{x'\}$, with the new cluster variable
$x'$ determined by an {\it exchange relation} of the form
\vspace{-.2cm}
\begin{equation*}
x x' = y^+ M^+ + y^- M^-.
\end{equation*}
Here $y^+$ and $y^-$ 
lie in a {\it coefficient semifield} $\PP$,
while $M^+$ and $M^-$ are  monomials in the elements of
$\mathbf x - \{x\}$.  
In the most general class of cluster algebras, 
there are two dynamics at play in the exchange relations:\
that of the monomials, and that of the coefficients, both of 
which are encoded in the exchange matrix. 

The aim of this article is threefold:
to give an elementary introduction to the theory of cluster
algebras;
to illustrate how the framework of cluster algebras
naturally arises in diverse areas of mathematics, in particular
Teichm\"uller theory;
and to illustrate how the theory of cluster algebras has been
an effective tool for solving outstanding conjectures,
in particular the Zamolodchikov periodicity conjecture from
mathematical physics.

To this end, in Section \ref{sect cluster algebras} we introduce
the notion of cluster algebra, beginning with the
simple but somewhat restrictive definition of a cluster algebra
defined by a quiver.  After giving a detailed example
(the \emph{type A cluster algebra}, and its realization
as the coordinate ring of the Grassmannian $Gr_{2,d}$), we give 
a more general definition of cluster algebra, in which both the cluster
variables and coefficient variables have their own dynamics. 

In Section \ref{sec:Teichmuller} we explain how cluster
algebras had appeared implicitly in Teichm\"uller theory, long before the 
introduction of cluster algebras themselves.  We start by associating
 a cluster algebra to any \emph{bordered surface with
marked points},
following
work of Fock-Goncharov \cite{FG1}, Gekhtman-Shapiro-Vainshtein \cite{GSV},
and Fomin-Shapiro-Thurston \cite{FST}.
This construction specializes to the type A example
from Section \ref{sect cluster algebras} when the surface is 
a disk with marked points on the boundary. 
We then explain how a cluster algebra from a bordered surface is
related to the decorated 
Teichm\"uller space of the corresponding \emph{cusped surface}.  
Finally we briefly 
discuss the Teichm\"uller space of a surface with oriented
geodesic boundary and two related spaces of laminations, and how 
natural coordinate systems  on these spaces are related to 
cluster algebras.

In Section \ref{sec:Z} we discuss Zamolodchikov's 
\emph{periodicity conjecture}
for \emph{Y-systems} \cite{Z}. 
Although this conjecture arose from Zamolodchikov's study of the 
thermodynamic Bethe ansatz in mathematical physics,
Fomin-Zelevinsky realized that it could be reformulated
in terms of the dynamics of coefficient variables
in a cluster algebra \cite{FZY}.
We then discuss how Fomin-Zelevinsky used 
fundamental structural results for finite type
cluster algebras to prove the periodicity 
conjecture for Dynkin diagrams \cite{FZY}, and how Keller used 
deep results from the categorification of cluster algebras to prove 
the corresponding conjecture for pairs of Dynkin diagrams \cite{Keller1, Keller2}.




\textsc{Acknowledgements:}
This paper was written to accompany my talk at the 
Current Events Bulletin Session at the Joint Mathematics Meetings
in San Diego, in January 2013;  I would like to thank the organizers
for the impetus to prepare this document.
I gratefully acknowledge the hospitality of MSRI 
during the Fall 2012 program on cluster algebras,
which provided an ideal environment for writing this paper.
In addition,
I am indebted to Bernhard Keller, Tomoki Nakanishi, and Dylan Thurston 
for useful conversations, and to  Keller for the use of several
figures.  
Finally, I am grateful to an anonymous referee for insightful comments.

\section{What is a cluster algebra?}\label{sect cluster algebras}

In this section we will define the notion of cluster algebra,
first introduced by Fomin and Zelevinsky in \cite{FZ1}.
For the purpose of acquainting the reader with the basic notions,
in Section \ref{sec:first} 
we will give the simple but somewhat restrictive definition
of a \emph{cluster algebra defined by a quiver}, also 
called a \emph{skew-symmetric cluster algebra of geometric type}.  
We will give a detailed example in Section \ref{sec:A}, and 
then present a more general definition of cluster algebra 
in Section \ref{sec:generaldef}.

\subsection{Cluster algebras from quivers}\label{sec:first}


\begin{definition}
[\emph{Quiver}]
A \emph{quiver} $Q$ is an oriented graph given by a set of 
vertices $Q_0$, a set of arrows $Q_1$, and two maps 
$s: Q_1 \to Q_0$ and $t: Q_1 \to Q_0$ taking an arrow to its source
and target, respectively.  
\end{definition}

A quiver $Q$ is \emph{finite} if the sets
$Q_0$ and $Q_1$ are finite.  
A \emph{loop} of a quiver is an arrow $\alpha$ whose
source and target coincide.  A \emph{$2$-cycle} of a quiver is a pair of distinct 
arrows $\beta$ and $\gamma$ such that $s(\beta) = t(\gamma)$
and $t(\beta) = s(\gamma)$.

\begin{definition}
[\emph{Quiver Mutation}]
Let $Q$ be a finite quiver without loops or $2$-cycles.
Let $k$ be a vertex of $Q$.  
Following \cite{FZ1}, we define the 
\emph{mutated quiver} $\mu_k(Q)$ as follows:
it has the same set of vertices as $Q$,
and its set of arrows is obtained by the following procedure:
\begin{enumerate}
\item for each subquiver $i \to k \to j$, add a new arrow $i \to j$;
\item reverse all allows with source or target $k$;
\item remove the arrows in a maximal set of pairwise
disjoint $2$-cycles.
\end{enumerate}
\end{definition}

\begin{exercise}
Mutation is an involution, that is,
$\mu_k^2(Q) = Q$ for each vertex $k$.  
\end{exercise}
Figure \ref{fig:mutation} shows two quivers which are obtained from 
each other by mutating at the vertex $1$.
We say that two quivers $Q$ and $Q'$ are {\it mutation-equivalent}
if one can get from $Q$ to $Q'$ by a sequence of mutations.

\begin{figure}[h]
\centering
\includegraphics[height=.8in]{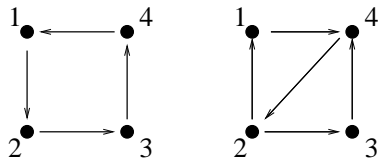}
\caption{Two mutation-equivalent quivers.}
\label{fig:mutation}
\end{figure}

\begin{definition}\label{rem:Bmatrix}
Let $Q$ be a finite quiver with no loops or $2$-cycles and whose
vertices are labeled $1,2,\dots,m$.  Then we may encode
$Q$  by an $m \times m$ 
skew-symmetric \emph{exchange matrix}  $B(Q) = (b_{ij})$ where 
$b_{ij}=-b_{ji} = \ell$ whenever there are $\ell$ arrows from vertex $i$ to vertex $j$.  We call $B(Q)$ the \emph{signed adjacency matrix} of the quiver.
\end{definition}

\begin{exercise}
Check that when one encodes a quiver $Q$ by a matrix
as in Definition \ref{rem:Bmatrix},
the matrix $B(\mu_k(Q))=(b'_{ij})$ 
is again an $m \times m$ skew-symmetric matrix,
whose entries are given by
\begin{equation}
\label{eq:matrix-mutation0}
b'_{ij} =
\begin{cases}
-b_{ij} & \text{if $i=k$ or $j=k$;} \\[.05in]
b_{ij}+b_{ik}b_{kj} & \text{if $b_{ik}>0$ and $b_{kj}>0$;}\\[.05in]
b_{ij}-b_{ik}b_{kj} & \text{if $b_{ik}<0$ and $b_{kj}<0$;}\\[.05in]
b_{ij} & \text{otherwise.}
\end{cases}
\end{equation}
\end{exercise}


\begin{definition}
[\emph{Labeled seeds}]
\label{def:seed0}
Choose $m\geq n$ positive integers.
Let $\Fcal$ be an \emph{ambient field} 
of rational functions 
in $n$ independent
variables
over
$\Q(x_{n+1},\dots,x_m)$. 
A \emph{labeled seed} in~$\Fcal$ is
a pair $(\xx, Q)$, where
\begin{itemize}
\item
$\xx = (x_1, \dots, x_m)$ forms a free generating 
set for 
$\Fcal$,
and
\item
$Q$ is a quiver on vertices
$1, 2, \dots,n, n+1, \dots, m$,
whose vertices $1,2, \dots, n$ are called
\emph{mutable}, and whose vertices $n+1,\dots, m$ are called \emph{frozen}.
\end{itemize}
We refer to~$\xx$ as the (labeled)
\emph{extended cluster} of a labeled seed $(\xx, Q)$.
The variables $\{x_1,\dots,x_n\}$ are called \emph{cluster
variables}, and the variables $c=\{x_{n+1},\dots,x_m\}$ are called
\emph{frozen} or \emph{coefficient variables}.
\end{definition}



\begin{definition}
[\emph{Seed mutations}]
\label{def:seed-mutation0}
Let $(\xx, Q)$ be a labeled seed in $\Fcal$,
and let $k \in \{1,\dots,n\}$.
The \emph{seed mutation} $\mu_k$ in direction~$k$ transforms
$(\xx, Q)$ into the labeled seed
$\mu_k(\xx,  Q)=(\xx', \mu_k(Q))$, where the cluster 
$\xx'=(x'_1,\dots,x'_m)$ is defined as follows:
$x_j'=x_j$ for $j\neq k$,
whereas $x'_k \in \Fcal$ is determined
by the \emph{exchange relation}
\begin{equation}
\label{exchange relation0}
x'_k\ x_k = 
 \ \prod_{\substack{\alpha\in Q_1 \\ s(\alpha)=k}} x_{t(\alpha)}
+ \ \prod_{\substack{\alpha\in Q_1 \\ t(\alpha)=k}} x_{s(\alpha)} \, .
\end{equation}
\end{definition}

\begin{remark}
Note that arrows between two frozen vertices of a quiver do not 
affect seed mutation (they do not affect the mutated quiver
or the exchange relation).  For that reason, one may omit
arrows between two frozen vertices.  Correspondingly,
when one represents a quiver by a matrix, one often omits 
the data corresponding to such arrows.  The resulting matrix 
$B$ is hence an $m \times n$ matrix rather than an $m \times m$ one.
\end{remark}

\begin{example}
Let $Q$ be the quiver on two vertices $1$ and $2$ with a single
arrow from $1$ to $2$.  Let $((x_1,x_2), Q)$ be an initial seed.  Then 
if we perform seed mutations in directions $1$, $2$, $1$, $2$, and $1$, 
we get the sequence of labeled seeds shown in Figure \ref{fig:seeds}.
Note that up to relabeling of the vertices
of the quiver, the initial seed and final seed coincide.
\begin{figure} \begin{center}
\input{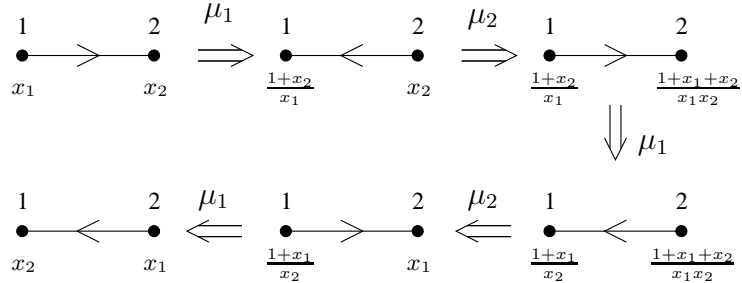}
\end{center}
\caption{Seeds and seed mutations in type $A_2$.}
\label{fig:seeds}
\end{figure}

\end{example}

\begin{definition}
[\emph{Patterns}]
\label{def:patterns0}
Consider the \emph{$n$-regular tree}~$\TT_n$
whose edges are labeled by the numbers $1, \dots, n$,
so that the $n$ edges emanating from each vertex receive
different labels.
A \emph{cluster pattern}  is an assignment
of a labeled seed $\Sigma_t=(\xx_t, Q_t)$
to every vertex $t \in \TT_n$, such that the seeds assigned to the
endpoints of any edge $t \overunder{k}{} t'$ are obtained from each
other by the seed mutation in direction~$k$.
The components of
$\xx_t$ are written as $\xx_t = (x_{1;t}\,,\dots,x_{n;t}).$
\end{definition}

Clearly, a cluster pattern  is uniquely determined
by an arbitrary  seed.

\begin{definition}
[\emph{Cluster algebra}]
\label{def:cluster-algebra0}
Given a cluster pattern, we denote
\begin{equation}
\label{eq:cluster-variables0}
\Xcal
= \bigcup_{t \in \TT_n} \xx_t
= \{ x_{i,t}\,:\, t \in \TT_n\,,\ 1\leq i\leq n \} \ ,
\end{equation}
the union of clusters of all the seeds in the pattern.
The elements $x_{i,t}\in \Xcal$ are called \emph{cluster variables}.
The 
\emph{cluster algebra} $\Acal$ associated with a
given pattern is the $\ZZ[c]$-subalgebra of the ambient field $\Fcal$
generated by all cluster variables: $\Acal = \ZZ[c] [\Xcal]$.
We denote $\Acal = \Acal(\xx,  Q)$, where
$(\xx,Q)$
is any seed in the underlying cluster pattern.
In this generality, 
$\Acal$ is called a \emph{cluster algebra from a quiver}, or a 
\emph{skew-symmetric cluster algebra of geometric type.}
We say that $\Acal$ has \emph{rank $n$} because each cluster contains 
$n$ cluster variables.
\end{definition}

\subsection{Example: the type $A$ cluster algebra}\label{sec:A}

In this section we will construct a cluster algebra using the 
combinatorics of triangulations of a $d$-gon  
(a polygon with $d$ vertices).  We will subsequently identify this
cluster algebra with the homogeneous coordinate ring of the Grassmannian
$Gr_{2,d}$ of $2$-planes in a $d$-dimensional vector space.

\begin{figure}[h]
\centering
\includegraphics[height=1.6in]{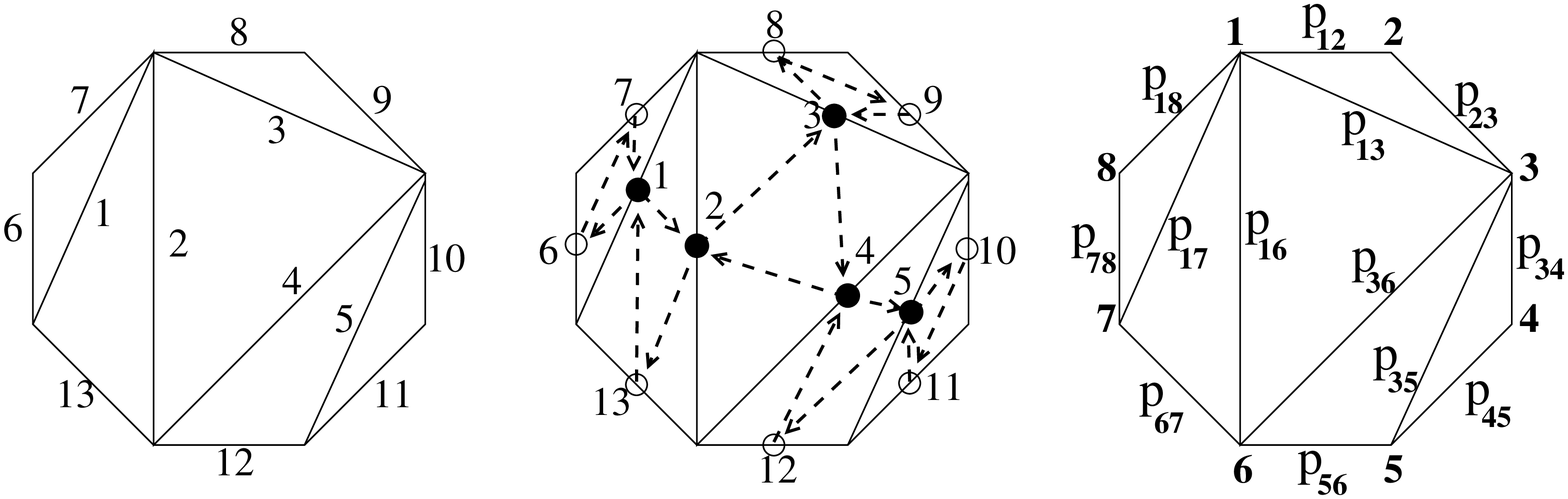}
\caption{A triangulation $T$ of an octagon, the quiver $Q(T)$,
and the labeling of $T$ by Pl\"ucker coordinates.}
\label{fig:octagon}
\end{figure}

\begin{definition}
[\emph{The quiver $Q(T)$}]
\label{def:QT}
Consider a $d$-gon $(d \geq 3$), and choose any triangulation $T$.
Label the $d-3$ diagonals of $T$ with the numbers $1,2,\dots,d-3$,
and label the $d$ sides of the polygon by the 
numbers $d-2, d-1, \dots,2d-3$.
Put a frozen vertex at the midpoint of each side of the polygon,
and put a mutable vertex at the midpoint of each diagonal of the polygon.
These $2d-3$ vertices are the vertices $Q_0(T)$ of $Q(T)$; 
label them according to the labeling of the diagonals and sides 
of the polygon.
Now within each triangle of $T$, inscribe a new triangle on the 
vertices $Q_0(T)$, whose edges
are oriented clockwise.  The edges of these inscribed triangles
comprise the set of arrows $Q_1(T)$ of $Q(T)$.
\end{definition}

See the left and middle of 
Figure \ref{fig:octagon} for an example of a triangulation
$T$ together with the corresponding quiver $Q(T)$.  
The frozen vertices
are indicated by hollow circles and the mutable vertices are indicated
by shaded circles.  The arrows of the quiver are indicated by dashed lines.

\begin{definition}
[\emph{The cluster algebra associated to a $d$-gon}]
Let $T$ be any triangulation of a $d$-gon, let
$m=2d-3$, and let $n=d-3$.  Set
$\xx=(x_1,\dots,x_{m})$.  Then 
$(\xx, Q(T))$ is a labeled seed and it determines a 
cluster algebra $\Acal(T) = \Acal(\xx,Q(T))$.
\end{definition}

\begin{remark}
The quiver $Q(T)$ depends on the choice of triangulation $T$.  However,
we will see 
in Proposition \ref{prop:ind}
that (the isomorphism class of) the cluster algebra $\Acal(T)$ does not depend on $T$,
only on the number $d$.
\end{remark}


\begin{definition}
[\emph{Flips}]
Consider a triangulation $T$ which contains a diagonal $t$.
Within $T$, the diagonal $t$ is the diagonal of some quadrilateral.
Then there is a new triangulation $T'$ which is obtained
by replacing the diagonal $t$ with the other diagonal of 
that quadrilateral.  This local move is called a \emph{flip}.
\end{definition}

Consider the graph whose vertex set is the set of triangulations 
of a $d$-gon, with an edge between two vertices whenever
the corresponding triangulations are related by a flip.
It is 
well-known that this ``flip-graph" is 
connected, and moreover, 
is the $1$-skeleton of a convex polytope called the 
associahedron.  See Figure \ref{fig:associahedron}
for a picture of the flip-graph of the hexagon.
\begin{figure}[h]
\centering
\includegraphics[height=4.7in]{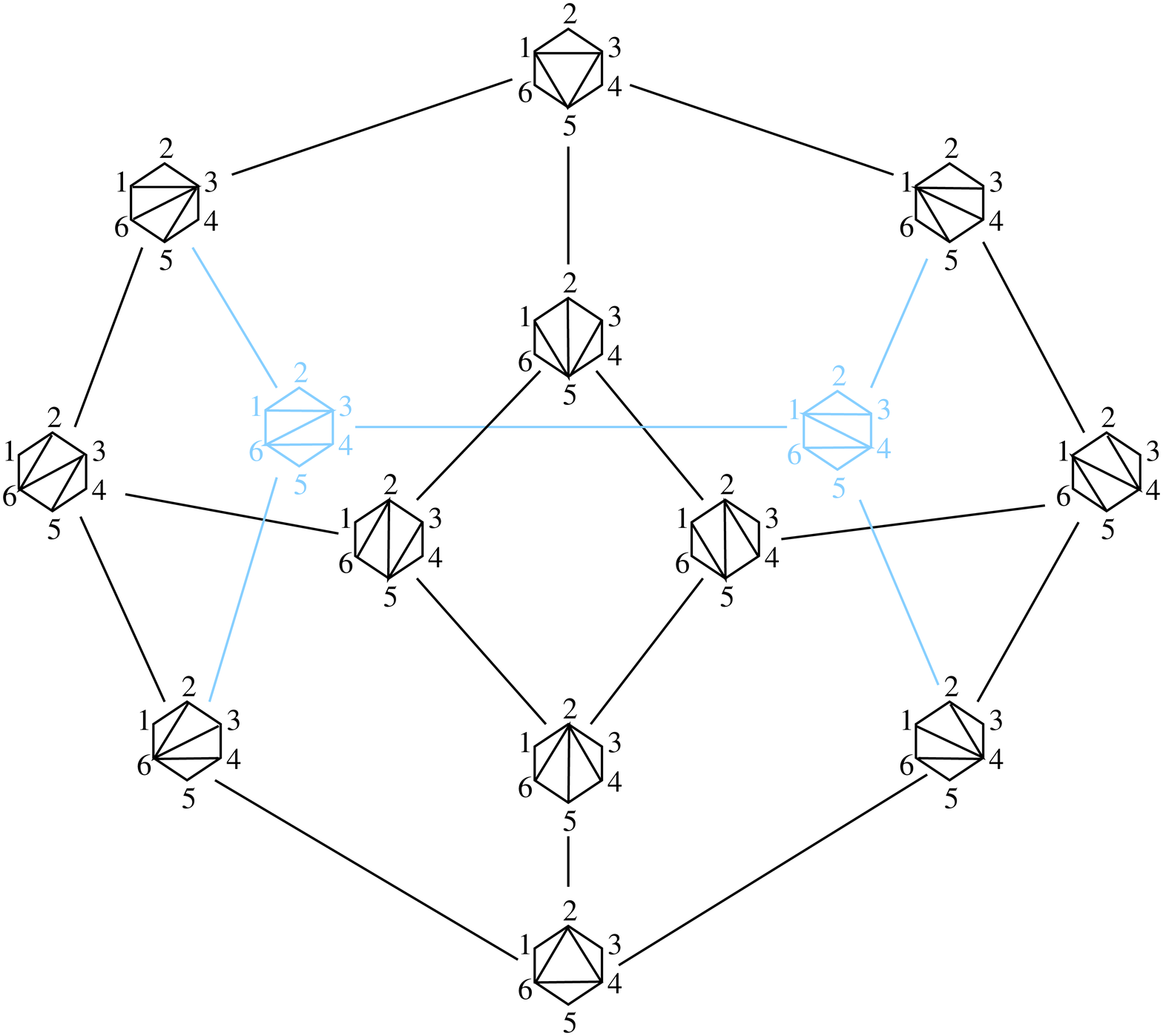}
\caption{The exchange graph of the cluster algebra of type $A_3$, 
which coincides with the $1$-skeleton of the associahedron.}
\label{fig:associahedron}
\end{figure}


\begin{exercise}\label{ex:flip}
Let $T$ be a triangulation of a polygon, and let 
$T'$ be the new triangulation obtained from $T$
by flipping the diagonal $k$.
Then the quiver associated to $T'$ is the same as the quiver
obtained from $Q(T)$ by mutating at $k$:
$Q(T') = \mu_k(Q(T))$.
\end{exercise}

\begin{prop}\label{prop:ind}
If $T_1$ and $T_2$ are two triangulations of a $d$-gon,
then the cluster algebras
$\Acal(T_1)$ and $\Acal(T_2)$ are isomorphic.
\end{prop}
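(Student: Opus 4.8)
The plan is to reduce the statement to a sequence of elementary flips and to translate each flip into a seed mutation. Fix the labelings used to define $\Acal(T_1)=\Acal(\xx,Q(T_1))$ and $\Acal(T_2)=\Acal(\xx,Q(T_2))$, where $\xx=(x_1,\dots,x_m)$, and pick a vertex $t_0\in\TT_n$ carrying the seed $(\xx,Q(T_1))$. Since the flip graph of the $d$-gon is connected, there is a chain of triangulations $T_1=S_0,S_1,\dots,S_r=T_2$ in which each $S_j$ is obtained from $S_{j-1}$ by a single flip. I would propagate the labeling of $T_1$ along this chain, giving each newly created diagonal the label of the one it replaces and keeping the $d$ side-labels fixed throughout, and let $k_j\in\{1,\dots,n\}$ be the label of the diagonal flipped at step $j$. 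Walking out of $t_0$ along the edges of $\TT_n$ labeled $k_1,\dots,k_r$ in turn lands at a vertex $t_r$ whose seed is $\bigl(\yy,\mu_{k_r}\cdots\mu_{k_1}(Q(T_1))\bigr)$ with $\yy=\mu_{k_r}\cdots\mu_{k_1}(\xx)$. Quivers arising from triangulations have no loops or $2$-cycles, so each mutation here is defined, and iterating Exercise~\ref{ex:flip} identifies the quiver of this seed with $Q(T_2)$ carried by the \emph{propagated} labeling $L$.

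Next I would observe that $(\xx,Q(T_1))$ and $(\yy,Q(T_2)_L)$ are thus two seeds of one and the same cluster pattern, so — a cluster pattern being determined by any one of its seeds — they give rise to the same set $\Xcal$ of cluster variables. All the mutations above are in mutable directions, so the frozen variables, and with them the ground ring $\ZZ[c]$, are untouched. Hence $\Acal(T_1)=\ZZ[c][\Xcal]=\Acal(\yy,Q(T_2)_L)$ as subalgebras of $\Fcal$.

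The last step, the only delicate one, is to pass from $\Acal(\yy,Q(T_2)_L)$ to $\Acal(\xx,Q(T_2))=\Acal(T_2)$, and this is exactly where ``equal'' must be weakened to ``isomorphic.'' The propagated labeling $L$ and the chosen labeling of $T_2$ differ by a permutation $\sigma$ of $\{1,\dots,m\}$ preserving the mutable block $\{1,\dots,n\}$ and the frozen block $\{n+1,\dots,m\}$ (flips never disturb the sides); relabeling the quiver vertices together with the corresponding entries of the extended cluster by $\sigma$ changes the cluster pattern only by a permutation of the edge-labels of $\TT_n$ (realized by an automorphism of the tree) and a permutation of positions within clusters, neither of which affects $\Xcal$ or the ring $\ZZ[c]$, so $\Acal(\yy,Q(T_2)_L)=\Acal(\sigma\cdot\yy,Q(T_2))$. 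Finally, $\sigma\cdot\yy$ and $\xx$ are both free generating sets of $\Fcal$ whose frozen entries form the same set $c$, so there is a $\Q$-algebra automorphism $\Phi$ of $\Fcal$ sending $\sigma\cdot\yy$ to $\xx$ entrywise; $\Phi$ carries $\ZZ[c]$ onto $\ZZ[c]$ and the cluster pattern of $(\sigma\cdot\yy,Q(T_2))$ onto that of $(\xx,Q(T_2))$, hence restricts to an isomorphism $\Acal(\sigma\cdot\yy,Q(T_2))\cong\Acal(T_2)$. Composing yields $\Acal(T_1)\cong\Acal(T_2)$. The substantive inputs are the connectivity of the flip graph and Exercise~\ref{ex:flip}; everything afterward is bookkeeping, the subtlest piece being this last replacement of the free generating set, which is why the proposition can only promise an isomorphism and not an identity.
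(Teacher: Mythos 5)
Your argument is correct and is exactly the paper's intended proof — the paper compresses it to one sentence ("This follows from Exercise~\ref{ex:flip} and the fact that the flip-graph is connected"), while you supply the implicit bookkeeping about propagated labelings and the final change of free generating set that turns equality of cluster patterns into an isomorphism of algebras. No gap; same approach, just fully spelled out.
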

\begin{proof}
This follows from 
Exercise \ref{ex:flip} and the fact that the flip-graph is connected.
\end{proof}

Since the cluster algebra associated to a triangulation of a $d$-gon
depends only on $d$, we will refer to this cluster algebra 
as $\Acal_{d-3}$.  We've chosen to index this cluster algebra by 
$d-3$ because this cluster algebra has rank $d-3$.

\subsubsection{The homogeneous coordinate ring of the Grassmannian $Gr_{2,d}$}

We now explain how the cluster algebra associated to a $d$-gon can be identified with the 
coordinate ring $\C[Gr_{2,d}]$ of (the affine cone over)
the Grassmannian $Gr_{2,d}$ of $2$-planes in a $d$-dimensional vector space. 

Recall that the coordinate ring
$\C[Gr_{2,d}]$ is generated by \emph{Pl\"ucker coordinates} 
$p_{ij}$ for $1 \leq i < j \leq d$.  The relations 
among the Pl\"ucker coordinates are generated by the \emph{three-term
Pl\"ucker relations}: for any $1 \leq i < j < k < \ell \leq d$,
one has  
\begin{equation}
\label{3-term}
p_{ik} p_{j\ell} = p_{ij} p_{k\ell} + p_{i\ell} p_{jk}.
\end{equation}

To make the connection with cluster algebras,
label the vertices of a $d$-gon from $1$ to $d$
in order around the boundary.  Then each side and diagonal of 
the polygon is uniquely identified by the labels of its endpoints.
Note that the Pl\"ucker coordinates for $Gr_{2,d}$  are in bijection with 
the set of sides and diagonals of the polygon, see the right of Figure
\ref{fig:octagon}.

By noting that  
the three-term Pl\"ucker relations correspond to  exchange relations
in $\Acal_{d-3}$, one may verify the following.

\begin{exercise}\label{ex:Ptolemy}
The cluster and coefficient variables of $\Acal_{d-3}$ are in bijection
with the diagonals and sides of the $d$-gon, and the clusters are in bijection
with triangulations of the $d$-gon.  Moreover
the coordinate ring of $Gr_{2,d}$ is isomorphic to the cluster algebra
$\Acal_{d-3}$ associated to the $d$-gon.
\end{exercise}

\begin{figure}[h]
\centering
\includegraphics[height=.7in]{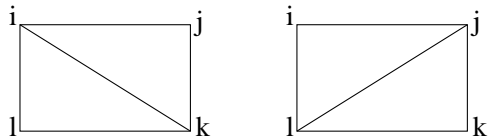}
\caption{A flip in a quadrilateral and the corresponding exchange relation $p_{ik} p_{j\ell} = p_{ij} p_{k\ell} + p_{i\ell} p_{jk}.$}
\label{fig:3-term}
\end{figure}

\begin{remark}
One may generalize this example of the type A cluster algebra
in several ways.  First, one may replace $Gr_{2,d}$ by 
an arbitrary Grassmannian, or partial flag variety.
It turns out that
the coordinate ring $\C[Gr_{k,d}]$ of any Grassmannian
has the structure of a cluster algebra \cite{Scott},
and more generally, so does the coordinate ring of any partial flag variety
$SL_m(\C)/P$ 
\cite{GLS-2008}.
Second, one may generalize 
this example by replacing the $d$-gon -- topologically a disk with 
$d$ marked points on the boundary -- by an orientable Riemann 
surface $S$ (with or without boundary) together with some marked points $M$
on $S$.
One may still consider triangulations  of $(S,M)$, and use the 
combinatorics of these triangulations to define a cluster algebra.
This cluster algebra is closely related to the 
\emph{decorated Teichmulller space} associated to $(S,M)$.
We will take up this theme in Section \ref{sec:Teichmuller}.
\end{remark}

\subsection{Cluster algebras revisited}\label{sec:generaldef}

We now give a more general definition of cluster algebra, 
following \cite{FZ4}, in which
the coefficient variables have their own dynamics.  In Section 
\ref{sec:Z} we will see that the dynamics of coefficient variables
is closely related to 
Zamolodchikov's Y-systems.

To define  a cluster algebra~$\Acal$ we first choose a 
\emph{semifield}
$(\PP,\oplus, \cdot)$, i.e.,
an abelian multiplicative group endowed with a binary operation of
\emph{(auxiliary) addition}~$\oplus$ which is commutative, associative, and
distributive with respect to the multiplication in~$\PP$.
The group ring~$\ZZ\PP$ will be
used as a \emph{ground ring} for~$\Acal$.
One important choice for $\PP$ is the tropical semifield 
(see Definition \ref{def:semifield-tropical}); in this case we say that the
corresponding cluster algebra is of {\it geometric type}.
\begin{definition}
[\emph{Tropical semifield}]
\label{def:semifield-tropical}
Let $\Trop (u_1, \dots, u_{m})$ be an abelian group, which is
freely generated by the $u_j$'s, and
written
multiplicatively.
We define  $\oplus$ in $\Trop (u_1,\dots, u_{m})$ by
\begin{equation}
\label{eq:tropical-addition}
\prod_j u_j^{a_j} \oplus \prod_j u_j^{b_j} =
\prod_j u_j^{\min (a_j, b_j)} \,,
\end{equation}
and call $(\Trop (u_1,\dots,u_{m}),\oplus,\cdot)$ a \emph{tropical
 semifield}.
Note that the group ring of $\Trop (u_1,\dots,u_{m})$ is the ring of Laurent
polynomials in the variables~$u_j\,$.
\end{definition}

As an \emph{ambient field} for
$\Acal$, we take a field $\Fcal$
isomorphic to the field of rational functions in $n$ independent
variables (here $n$ is the \emph{rank} of~$\Acal$),
with coefficients in~$\QQ \PP$.
Note that the definition of $\Fcal$ does not involve
the auxiliary addition
in~$\PP$.

\begin{definition}
[\emph{Labeled seeds}]
\label{def:seed}
A \emph{labeled seed} in~$\Fcal$ is
a triple $(\xx, \yy, B)$, where
\begin{itemize}
\item
$\xx = (x_1, \dots, x_n)$ is an $n$-tuple 
from $\Fcal$
forming a \emph{free generating set} over $\QQ \PP$, that is, 
$x_1, \dots, x_n$
are algebraically independent over~$\QQ \PP$, and
$\Fcal = \QQ \PP(x_1, \dots, x_n)$.
\item
$\yy = (y_1, \dots, y_n)$ is an $n$-tuple
from $\PP$, and
\item
$B = (b_{ij})$ is an $n\!\times\! n$ integer matrix
which is \emph{skew-symmetrizable}, that is, 
there exist positive integers $d_1,\dots,d_n$
such that $d_i b_{ij} = -d_j b_{ij}$.
\end{itemize}
We refer to~$\xx$ as the (labeled)
\emph{cluster} of a labeled seed $(\xx, \yy, B)$,
to the tuple~$\yy$ as the \emph{coefficient tuple}, and to the
matrix~$B$ as the \emph{exchange matrix}.
\end{definition}

We obtain ({\it unlabeled}) {\it seeds} from labeled seeds
by identifying labeled seeds that differ from
each other by simultaneous permutations of
the components in $\xx$ and~$\yy$, and of the rows and columns of~$B$.

In what follows, we
use the notation
$[x]_+ = \max(x,0)$.


\begin{definition}
[\emph{Seed mutations}]
\label{def:seed-mutation}
Let $(\xx, \yy, B)$ be a labeled seed in $\Fcal$,
and let $k \in \{1,\dots,n\}$.
The \emph{seed mutation} $\mu_k$ in direction~$k$ transforms
$(\xx, \yy, B)$ into the labeled seed
$\mu_k(\xx, \yy, B)=(\xx', \yy', B')$ defined as follows:
\begin{itemize}
\item
The entries of $B'=(b'_{ij})$ are given by
\begin{equation}
\label{eq:matrix-mutation}
b'_{ij} =
\begin{cases}
-b_{ij} & \text{if $i=k$ or $j=k$;} \\[.05in]
b_{ij}+b_{ik}b_{kj} & \text{if $b_{ik}>0$ and $b_{kj}>0$;}\\[.05in]
b_{ij}-b_{ik}b_{kj} & \text{if $b_{ik}<0$ and $b_{kj}<0$;}\\[.05in]
b_{ij} & \text{otherwise.}
\end{cases}
\end{equation}
\item
The coefficient tuple $\yy'=(y_1',\dots,y_n')$ is given by
\begin{equation}
\label{eq:y-mutation}
y'_j =
\begin{cases}
y_k^{-1} & \text{if $j = k$};\\[.05in]
y_j y_k^{[b_{kj}]_+}
(y_k \oplus 1)^{- b_{kj}} & \text{if $j \neq k$}.
\end{cases}
\end{equation}
\item
The cluster $\xx'=(x_1',\dots,x_n')$ is given by
$x_j'=x_j$ for $j\neq k$,
whereas $x'_k \in \Fcal$ is determined
by the \emph{exchange relation}
\begin{equation}
\label{exchange relation}
x'_k = \frac
{y_k \ \prod x_i^{[b_{ik}]_+}
+ \ \prod x_i^{[-b_{ik}]_+}}{(y_k \oplus 1) x_k} \, .
\end{equation}
\end{itemize}
\end{definition}

We say that two exchange matrices $B$ and $B'$ are {\it mutation-equivalent}
if one can get from $B$ to $B'$ by a sequence of mutations.

If we forget the cluster variables, then we refer to the resulting
seeds and operation of mutation as \emph{Y-seeds} and 
\emph{Y-seed mutation}.
\begin{definition}
[\emph{Y-seed mutations}]
\label{def:Yseed-mutation}
Let $(\yy, B)$ be a labeled seed in which we have omitted the 
cluster $\xx$,
and let $k \in \{1,\dots,n\}$.
The \emph{Y-seed mutation} $\mu_k$ in direction~$k$ transforms
$(\yy, B)$ into the \emph{labeled Y-seed}
$\mu_k(\yy, B)=(\yy', B')$, where 
$\yy'$ and $B'$ are  as in Definition \ref{def:seed-mutation}.
\end{definition}

\begin{definition}
[\emph{Patterns}]
\label{def:patterns}
Consider the \emph{$n$-regular tree}~$\TT_n$
whose edges are labeled by the numbers $1, \dots, n$,
so that the $n$ edges emanating from each vertex receive
different labels.
A \emph{cluster pattern}  is an assignment
of a labeled seed $\Sigma_t=(\xx_t, \yy_t, B_t)$
to every vertex $t \in \TT_n$, such that the seeds assigned to the
endpoints of any edge $t \overunder{k}{} t'$ are obtained from each
other by the seed mutation in direction~$k$.
The components of $\Sigma_t$ are written as:
\begin{equation}
\label{eq:seed-labeling}
\xx_t = (x_{1;t}\,,\dots,x_{n;t})\,,\quad
\yy_t = (y_{1;t}\,,\dots,y_{n;t})\,,\quad
B_t = (b^t_{ij})\,.
\end{equation}
\end{definition}

One may view a cluster pattern as a \emph{discrete dynamical system} on 
an $n$-regular tree.  
If we ignore the coefficients
(i.e. if we set each coefficient tuple equal to $(1,\dots,1)$ and choose
a semifield such that $1\oplus 1 = 1$), then 
we refer to the evolution of the cluster variables as
\emph{cluster dynamics}.  On the other hand, ignoring 
the cluster variables, we refer to the evolution 
of the coefficient variables  as \emph{coefficient dynamics}.

\begin{definition}
[\emph{Cluster algebra}]
\label{def:cluster-algebra}
Given a cluster pattern, we denote
\begin{equation}
\label{eq:cluster-variables}
\Xcal
= \bigcup_{t \in \TT_n} \xx_t
= \{ x_{i,t}\,:\, t \in \TT_n\,,\ 1\leq i\leq n \} \ ,
\end{equation}
the union of clusters of all the seeds in the pattern.
The elements $x_{i,t}\in \Xcal$ are called \emph{cluster variables}.
The 
\emph{cluster algebra} $\Acal$ associated with a
given pattern is the $\ZZ \PP$-subalgebra of the ambient field $\Fcal$
generated by all cluster variables: $\Acal = \ZZ \PP[\Xcal]$.
We denote $\Acal = \Acal(\xx, \yy, B)$, where
$(\xx,\yy,B)$
is any seed in the underlying cluster pattern.
\end{definition}

We now explain the relationship between 
Definition \ref{def:cluster-algebra0} -- 
the definition of cluster algebra
we gave in Section \ref{sec:first} --
and Definition \ref{def:cluster-algebra}.
There are two apparent differences between the definitions.
First, in Definition \ref{def:cluster-algebra0},
the dynamics of mutation was encoded by a quiver, while 
in Definition \ref{def:cluster-algebra}, the dynamics
of mutation was encoded by a skew-symmetrizable matrix $B$.
Clearly if $B$ is not only skew-symmetrizable but also skew-symmetric,
then $B$ can be regarded as the signed adjacency matrix
of a quiver.  In that case mutation of $B$
reduces to the mutation of the corresponding quiver, and the two
notions of exchange relation coincide.
Second, in Definition \ref{def:cluster-algebra0}, the 
coefficient variables are ``frozen" and do not mutate, 
while in Definition \ref{def:cluster-algebra}, the
coefficient variables $y_i$ 
have a dynamics of their own.  It turns out that if 
in Definition \ref{def:cluster-algebra} the semifield 
$\PP$ is the tropical semifield (and $B$ is skew-symmetric), then 
Definitions \ref{def:cluster-algebra0} and \ref{def:cluster-algebra}
are equivalent.  This is a consequence of the following exercise.

\begin{exercise}\label{ex:trop}
Let $\PP=\Trop(x_{n+1},\dots, x_m)$ be the tropical semifield
with generators $x_{n+1}, \dots, x_m$, and   
consider a cluster algebra as defined in Definition
\ref{def:cluster-algebra}.  Since the coefficients
$y_{j;t}$ at the seed $\Sigma_t = (\xx_t, \yy_t, B_t)$ 
are Laurent monomials in $x_{n+1},\dots,x_m$, we may define
the integers $b_{ij}^t$ for $j\in \{1,\dots,n\}$ and $n <i\leq m$ by
$$y_{j;t} = \prod_{i=n+1}^m x_i^{b_{ij}^t}.$$
This gives a natural way of including the exchange matrix $B_t$
as the principal $n \times n$ submatrix into a larger $m \times n$
matrix $\widetilde{B}_t = (b_{ij}^t)$ where $1\leq i \leq m$ and 
$1 \leq j \leq n$, whose matrix elements
$b_{ij}^t$ with $i>n$ encode the coefficients $y_j=y_{j;t}$.

Check that with the above conventions,
the exchange relation \eqref{exchange relation}
reduces to 
the exchange relation \eqref{exchange relation0}, and that the 
Y-seed mutation rule \eqref{eq:y-mutation} implies that 
the extended exchange matrix $\widetilde{B}_t$ mutates according to 
\eqref{eq:matrix-mutation0}.  
\end{exercise}

\subsection{Structural properties of cluster algebras}

In this section we will explain various structural properties of cluster
algebras.  
Throughout this section $\Acal$ will be an arbitrary cluster algebra as
defined in Section \ref{sec:generaldef}.

From the definitions, it is clear that any cluster variable
can be expressed as a rational function in the variables of 
an arbitrary cluster.  However, 
the remarkable {\it Laurent phenomenon}, proved in
\cite[Theorem 3.1]{FZ1},
 asserts that 
each such rational function is actually a Laurent polynomial.

\begin{theorem} 
[\emph{Laurent Phenomenon}]
\label{Laurent}
The cluster algebra $\Acal$ associated with a seed
$\Sigma = (\xx,\yy,B)$ is contained in the Laurent polynomial ring
$\ZZ \PP [\xx^{\pm 1}]$, i.e.\ every element of $\Acal$ is a
Laurent polynomial over $\ZZ \PP$ in the cluster variables
from $\xx=(x_1,\dots,x_n)$.
\end{theorem}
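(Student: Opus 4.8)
The plan is to prove the Laurent phenomenon by induction on the distance in $\TT_n$ from the base vertex $t_0$ carrying the seed $\Sigma = (\xx, \yy, B)$. Fix $\xx = (x_1,\dots,x_n)$ as the reference cluster; I want to show that every cluster variable $x_{i;t}$ lies in $\ZZ\PP[\xx^{\pm 1}]$. The base case $t = t_0$ is trivial, and a single mutation step $\mu_k$ away from $t_0$ is handled directly by the exchange relation \eqref{exchange relation}: the numerator is a polynomial in $x_1,\dots,x_n$ with coefficients in $\ZZ\PP$, and dividing by $x_k$ (times the unit $y_k \oplus 1 \in \PP$) keeps us inside $\ZZ\PP[\xx^{\pm 1}]$. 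The content of the theorem is of course that this persists after arbitrarily many mutations, even though a naive iteration would introduce denominators that are themselves Laurent polynomials rather than monomials.

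The standard and cleanest route, following \cite[Theorem 3.1]{FZ1}, is to isolate the essential combinatorial mechanism in a local lemma — the \emph{Caterpillar Lemma}. One considers a finite subtree of $\TT_n$ shaped like a ``caterpillar'': a path $t_0 \overunder{k_1}{} t_1 \overunder{k_2}{} \cdots \overunder{k_\ell}{} t_\ell$ together with, at each internal vertex $t_j$, an extra pendant edge labeled by a second index. The claim is that if one attaches Laurent polynomials (in suitable variable sets) to the vertices of this caterpillar so that adjacent ones are related by exchange relations, then the Laurent polynomial at one end, when re-expressed in the variables at the other end, is still Laurent. The proof of the Caterpillar Lemma reduces, via a gcd/coprimality argument, to checking two things at the ``spine'' vertices: first, that the relevant exchange polynomials are not divisible by any cluster variable, and second — the crux — that after performing two consecutive mutations $\mu_k$ then $\mu_{k'}$ and then returning, the compositions agree as Laurent polynomials. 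This second point is exactly where one uses that mutation is an involution (the stated Exercise) and where one must verify a rank-$2$ style identity among the four seeds of a square in $\TT_n$.

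Concretely, the key steps in order are: (1) state the Caterpillar Lemma precisely, phrasing it for a general family of exchange-type relations $P \cdot x'_k = (\text{polynomial not divisible by } x_k)$; (2) reduce Theorem \ref{Laurent} to the lemma by observing that any two vertices of $\TT_n$ lie on a common caterpillar subtree and that cluster variables at adjacent vertices satisfy exchange relations of the required shape; (3) prove the lemma by induction on the length $\ell$ of the spine, where the inductive step rewrites the Laurent expression at $t_\ell$ first in the cluster at $t_1$ (by induction) and then pushes it across the edge $t_0 \overunder{k_1}{} t_1$; (4) handle that last push by a coprimality argument: the expression is manifestly Laurent in all of $x_1,\dots,x_n$ except possibly with a denominator that is a power of the exchange polynomial at $t_1$, and one shows this polynomial is coprime to $x_{k_1}$ and that the numerator is divisible by the offending power. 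Throughout, the pendant edges of the caterpillar are what supply the auxiliary variables needed to run this coprimality bookkeeping.

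The main obstacle is step (4), the coprimality/divisibility argument at the junction — specifically, showing that the exchange polynomial $\prod x_i^{[b_{ik}]_+} y_k + \prod x_i^{[-b_{ik}]_+}$ (or its image after intervening mutations) shares no common factor with $x_k$, and that whatever power of it appears in the denominator actually cancels. This is where the hypothesis that $B$ has no loops (equivalently $b_{kk} = 0$, so $x_k$ does not appear in its own exchange polynomial) and the precise form of \eqref{eq:matrix-mutation} get used, and it is the step that genuinely requires the caterpillar structure rather than a plain path. The coefficient dynamics \eqref{eq:y-mutation} only ever contributes factors from $\PP$, which are units in $\ZZ\PP$, so they ride along harmlessly and do not affect the Laurent property; the entire difficulty is in the $\xx$-variables.
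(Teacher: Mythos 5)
The paper does not prove this theorem: it is quoted verbatim from Fomin--Zelevinsky with the citation \cite[Theorem 3.1]{FZ1}, so there is no internal proof to compare yours against. Judged on its own terms, your outline is a faithful sketch of the original Fomin--Zelevinsky argument: reduction to a caterpillar subtree whose spine is the mutation path, induction on the length of the spine, and a gcd/coprimality argument carried out in the Laurent polynomial ring $\ZZ\PP[\xx^{\pm 1}]$ (for which one should note that this ring is a UFD because the multiplicative group of any semifield is torsion-free). Two small inaccuracies: $\TT_n$ is a tree, so there are no ``squares'' of seeds; the configuration on which the rank-two-type identity is actually verified is a path of three consecutive edges with labels of the form $j,k,j$, the point being that Laurentness over $t_0$ is deduced from Laurentness over $t_1$, $t_2$, $t_3$ along such a path. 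And, as you concede, step (4) -- showing the exchange polynomial at $t_1$ is coprime to each $x_i$ and that the power of it appearing in the denominator cancels -- is the entire mathematical content; a complete write-up would have to carry out that computation explicitly rather than gesture at it. As a blind reconstruction of the standard proof strategy, however, the proposal identifies all the right ingredients, including the role of $b_{kk}=0$ and the harmlessness of the coefficient dynamics since elements of $\PP$ are units in $\ZZ\PP$.
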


Let $\Acal$ be a cluster algebra, 
$\Sigma$ be a seed, and $x$
be a cluster variable of $\Acal$.  Let 
$[x]_{\Sigma}^{\Acal}$ denote the Laurent polynomial 
which expresses $x$ in 
terms of the cluster variables from $\Sigma$; it is called
the {\it cluster expansion} of $x$ in terms of $\Sigma$.
The longstanding {\it Positivity Conjecture} \cite{FZ1} says that 
the coefficients that appear in such Laurent polynomials
are positive.

\begin{conjecture} 
[\emph{Positivity Conjecture}]
\label{conj:pos}
For any cluster algebra $\Acal$, any seed $\Sigma$, and any cluster
variable $x$, the Laurent polynomial
$[x]_{\Sigma}^{\Acal}$ has coefficients which are 
nonnegative 
integer linear combinations of elements in $\PP$.
\end{conjecture}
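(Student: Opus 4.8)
The plan is to produce, for every seed $\Sigma=(\xx,\yy,B)$ and every cluster variable $x$, a model in which the coefficients of $[x]_\Sigma^{\Acal}$ are realized as cardinalities of finite sets or as dimensions of vector spaces, so that nonnegativity is automatic. A standard first reduction, due to Fomin--Zelevinsky \cite{FZ4}, lets us assume $\Acal$ has principal coefficients: a cluster variable in an arbitrary coefficient system is obtained from the corresponding one with principal coefficients by a subtraction-free substitution, so it suffices to treat that case, and in fact to prove that the $F$-polynomials of Fomin--Zelevinsky have nonnegative coefficients. The Laurent Phenomenon (Theorem \ref{Laurent}) already tells us these are honest Laurent polynomials; the content is the sign of their coefficients.

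Next I would dispatch the cases where a positive model is visible. If $B$ is skew-symmetric and mutation-equivalent to an acyclic quiver, invoke the cluster category of Buan--Marsh--Reineke--Reiten--Todorov: each non-initial cluster variable is the Caldero--Chapoton character of an indecomposable rigid object $M$, so $[x]_\Sigma^{\Acal}$ is a generating function $\sum_{e}\chi(\mathrm{Gr}_e(M))\,\xx^{\,g(e)}$ over Euler characteristics of quiver Grassmannians of $M$; for acyclic quivers these Grassmannians admit cell decompositions (equivalently, one realizes the expansion through Nakajima's graded quiver varieties), so the coefficients become Betti numbers, hence $\geq 0$. For cluster algebras from a bordered surface --- the class containing the type $A$ example $\Acal_{d-3}$ of Section \ref{sec:A} --- I would avoid categorification entirely and use the explicit snake- and band-graph expansion of Musiker--Schiffler--Williams: $[x]_\Sigma^{\Acal}$ is a sum over perfect matchings of a planar graph built from the arc $x$ and the triangulation $\Sigma$, each matching contributing a Laurent monomial with coefficient $+1$. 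One proves this formula by induction on flips, using Exercise \ref{ex:flip} and the fact that the three-term Pl\"ucker relations \eqref{3-term} are exactly the exchange relations; positivity for $\Acal_{d-3}$ is then immediate.

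For a general skew-symmetric $B$, I would run the Caldero--Chapoton argument inside Amiot's generalized cluster category attached to a nondegenerate quiver with potential (Derksen--Weyman--Zelevinsky), via Plamondon's cluster-expansion formula; positivity then reduces to nonnegativity of the Euler characteristics of Grassmannians of decorated representations, which Efimov establishes through a $\C^\times$-action with an attracting cell decomposition. For a genuinely skew-symmetrizable (non-skew-symmetric) $B$ there is no additive categorification to lean on, and the plan is instead to pass to the cluster variety and scattering diagram of Gross--Hacking--Keel--Kontsevich: construct the consistent scattering diagram of $B$, attach a theta function to each cluster monomial, and identify $[x]_\Sigma^{\Acal}$ with the expansion of a theta function, which is a sum over \emph{broken lines} --- a manifestly positive count.

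The hard part is exactly this last case. Everything rests on (i) proving consistency of the scattering diagram together with positivity of the wall-crossing automorphisms and of the structure constants for multiplication of theta functions, and (ii) matching theta-function expansions with the combinatorially defined cluster expansions of Section \ref{sec:generaldef}, in particular showing that cluster monomials \emph{are} theta functions. By comparison the skew-symmetric case is tractable because categorification supplies the geometry for free, and the acyclic and surface cases are, in this light, a warm-up; I expect essentially all of the difficulty of the conjecture to be concentrated in building and exploiting the scattering-diagram machinery in the non-skew-symmetric generality.
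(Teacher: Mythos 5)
You should first note that the paper does not prove this statement at all: it is recorded as the (then open) \emph{Positivity Conjecture}, with the text explicitly stating that it ``is open in general'' and has only been verified in special cases such as cluster algebras from surfaces \cite{MSW}. So there is no proof in the paper to compare against, and your text should be judged as a standalone argument. As such, it is not a proof but a research program: an accurate and well-organized map of the known partial results and of the route by which the conjecture was eventually settled, with every genuinely hard step deferred to machinery whose construction \emph{is} the content of the problem. Concretely, the following are asserted rather than established: (i) that the quiver Grassmannians arising from Caldero--Chapoton characters of rigid objects have nonnegative Euler characteristics in general (the cell-decomposition and $\C^{\times}$-action arguments you invoke are only known to apply under acyclicity or other special hypotheses, not for an arbitrary nondegenerate quiver with potential); (ii) that the scattering diagram of an arbitrary skew-symmetrizable exchange matrix is consistent with positive wall-crossing functions; and (iii) that cluster monomials coincide with theta functions, so that the broken-line expansion computes $[x]_{\Sigma}^{\Acal}$. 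Each of (ii) and (iii) is a major theorem of Gross--Hacking--Keel--Kontsevich, not a step one can take for granted, and without them the final and most general case of your argument is empty.

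Two smaller points. First, your reduction to principal coefficients via the separation formulas of \cite{FZ4} is correct and is the right first move, but you should state what it actually requires: nonnegativity of the coefficients of the $F$-polynomials, plus the fact that evaluating a subtraction-free expression in the semifield $\PP$ preserves the form demanded by the conjecture (a nonnegative integer combination of elements of $\PP$). Second, you present the surface and acyclic cases as warm-ups subsumed by the general machinery; historically the skew-symmetric case was in fact settled by Lee and Schiffler by an elementary inductive argument on rank-two subpatterns, entirely independent of categorification and scattering diagrams, which shows that the difficulty is not distributed the way your last paragraph suggests. If you want this to be a proof rather than a plan, you must either carry out the scattering-diagram construction (items (ii) and (iii) above) or supply an argument of Lee--Schiffler type together with a separate treatment of the skew-symmetrizable case.
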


While Conjecture \ref{conj:pos} is open in general, it has
been proved in some special cases, see for example 
\cite{CalRein}, \cite{MSW}, \cite{Nak}, \cite{KdF},
\cite{HL-Duke}, \cite{KimuraQin}.

One of Fomin-Zelevinsky's motivations for introducing cluster
algebras was the desire to understand the canonical bases
of quantum groups due to Lusztig and Kashiwara \cite{LuB, KaB}.
See  \cite{GLS-survey} for some recent results connecting
cluster algebras and canonical bases.
Some of the conjectures below, 
including Conjectures \ref{conj:ind} and \ref{conj:strong}, are
motivated in part by the conjectural connection between cluster algebras and 
canonical bases.

\begin{definition}
[\emph{Cluster monomial}]
A \emph{cluster monomial} in a cluster algebra $\Acal$ is a monomial 
in cluster variables, all of which belong to the same cluster.
\end{definition}

\begin{conjecture}\label{conj:ind}
Cluster monomials are linearly independent.
\end{conjecture}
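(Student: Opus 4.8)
The plan is to prove linear independence of cluster monomials by combining the Laurent phenomenon (Theorem~\ref{Laurent}) with a grading/leading-term argument. The key idea is that although all cluster monomials live in the ambient field $\Fcal$, their cluster expansions with respect to a \emph{fixed} reference seed $\Sigma = (\xx, \yy, B)$ have ``leading terms'' (in a suitable sense) that are controlled by combinatorial data attached to the monomial, and distinct cluster monomials produce distinct leading data. Once one knows the leading exponents are all different, no nontrivial $\ZZ\PP$-linear combination of cluster monomials can vanish, since the extremal term cannot be cancelled.

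First I would fix a seed $\Sigma$ and recall that by Theorem~\ref{Laurent} every cluster variable $x$ — hence every cluster monomial $m$ — expands as a Laurent polynomial $[m]_\Sigma^\Acal \in \ZZ\PP[\xx^{\pm 1}]$. To each such Laurent polynomial one attaches a \emph{degree vector} $\mathbf{g}(m) \in \ZZ^n$ recording, say, the componentwise maximal (or, following Fomin–Zelevinsky's conventions, the componentwise minimal) exponents appearing; the refined statement is that each cluster variable has a well-defined $\mathbf{g}$-vector, and for a cluster monomial the $\mathbf{g}$-vector is the sum of the $\mathbf{g}$-vectors of its constituent cluster variables (with multiplicity). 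The crucial structural input — which I would invoke as a known property of the $\mathbf{g}$-vector parametrization — is that the map sending a cluster monomial to its $\mathbf{g}$-vector is \emph{injective}: cluster monomials are in bijection with the lattice points of the fan of $\mathbf{g}$-vectors, a single cone per cluster, and distinct monomials land at distinct points.

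The next step is the linear-algebra conclusion. Suppose $\sum_{i} c_i\, m_i = 0$ with $c_i \in \ZZ\PP$ not all zero and the $m_i$ distinct cluster monomials. Expand everything in $\ZZ\PP[\xx^{\pm 1}]$. Among the indices $i$ with $c_i \neq 0$, choose one whose $\mathbf{g}$-vector $\mathbf{g}(m_i)$ is extremal in an appropriate term order on $\ZZ^n$ refining the componentwise partial order (this uses injectivity of $\mathbf{g}$ to guarantee the extremal vector occurs for a \emph{unique} surviving index). The monomial $\xx^{\mathbf{g}(m_i)}$ then appears in $[m_i]_\Sigma^\Acal$ with a coefficient that is a unit of $\PP$ (in fact $1$, up to the coefficient bookkeeping), and it cannot be produced by any other $[m_j]_\Sigma^\Acal$ with $c_j \neq 0$, nor cancelled, since those have strictly smaller $\mathbf{g}$-vectors in the term order. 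Hence the coefficient of $\xx^{\mathbf{g}(m_i)}$ in $\sum c_i m_i$ is $c_i$ times a unit, which is nonzero — contradicting $\sum c_i m_i = 0$.

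The main obstacle is establishing the two facts I have treated as black boxes: the existence of well-behaved $\mathbf{g}$-vectors (which requires the theory of principal coefficients and the separation-of-additions formula of Fomin–Zelevinsky), and especially the \emph{injectivity} of the $\mathbf{g}$-vector map on cluster monomials. This injectivity is genuinely deep — it is equivalent to the linear independence statement being a \emph{faithful} combinatorial shadow — and in full generality it rests on sign-coherence of $c$-vectors, which was itself only settled later (via categorification or via scattering diagrams). So honestly, the ``elementary'' version of this proof reduces the conjecture to the sign-coherence phenomenon; for skew-symmetric $\Acal$ one can alternatively run the argument through the categorification (cluster categories / quiver Hecke algebras), where cluster monomials correspond to classes of rigid objects and linear independence becomes a statement about distinctness in the Grothendieck group. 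Either way, the combinatorial skeleton above is the same; only the input justifying injectivity changes.
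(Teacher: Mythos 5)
You are attempting to prove a statement that the paper itself presents as an open problem: Conjecture~\ref{conj:ind} is stated without proof, and the text explicitly says the best result to date is the theorem of Cerulli Irelli--Keller--Labardini-Fragoso--Plamondon \cite{CKLP}, which establishes linear independence only for cluster algebras defined by a quiver (the skew-symmetric case), via categorification. So there is no ``paper's own proof'' to compare against; what you have written is a reduction of the conjecture to two deep inputs --- existence of well-defined $\mathbf{g}$-vectors and injectivity of the $\mathbf{g}$-vector map on cluster monomials --- and you correctly and honestly flag that the second input rests on sign-coherence, which is exactly where the difficulty lives. That reduction is indeed the strategy behind the known partial results, so as a roadmap your sketch is sound; as a proof of the conjecture it is not one, and you should not present the black boxes as available in the generality of Definition~\ref{def:cluster-algebra} (arbitrary skew-symmetrizable $B$, arbitrary semifield $\PP$), where they were not established at the time.

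One step of your linear-algebra endgame also has a genuine gap as written. You argue that the monomial $\xx^{\mathbf{g}(m_i)}$ for the extremal index $i$ ``cannot be produced by any other $[m_j]_{\Sigma}^{\Acal}$,'' but injectivity of $\mathbf{g}$-vectors only tells you the \emph{leading} exponents of the $[m_j]$ are distinct; a priori a non-leading term of some $[m_j]$ could equal $\xx^{\mathbf{g}(m_i)}$ and cancel it. The standard way to close this is not a term order but a grading: with principal coefficients, Fomin--Zelevinsky's separation formula writes $[m]$ as $\xx^{\mathbf{g}(m)}F(\hat y_1,\dots,\hat y_n)$, and under the $\ZZ^n$-grading with $\deg(x_i)=\mathbf{e}_i$ and $\deg(y_j)$ equal to minus the $j$-th column of $B$, each $\hat y_j$ is homogeneous of degree $0$, so the entire expansion of $m$ is homogeneous of degree $\mathbf{g}(m)$. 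Distinct $\mathbf{g}$-vectors then give homogeneous elements of distinct degrees, which are automatically linearly independent --- no extremal-term selection is needed. You would still owe the passage from principal coefficients to a general coefficient semifield, and, again, the injectivity of $\mathbf{g}$ itself.
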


The best result to date towards Conjecture \ref{conj:ind} is the following.
\begin{theorem}\cite{CKLP}
In a cluster algebra defined by a quiver, the cluster monomials
are linearly independent.
\end{theorem}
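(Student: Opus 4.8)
The plan is to prove linear independence of cluster monomials in a cluster algebra $\Acal$ defined by a quiver by exploiting the interplay between the Laurent phenomenon (Theorem~\ref{Laurent}) and a suitable grading on $\Acal$, following the approach of categorification and $g$-vectors. First I would reduce to the case of a cluster algebra with \emph{principal coefficients}: by the separation-of-additions formalism of Fomin--Zelevinsky, a cluster monomial in an arbitrary cluster algebra can be recovered from the corresponding cluster monomial in the principal-coefficient model by specializing coefficients, and a linear relation upstairs would descend to one downstairs, so it suffices to treat the principal-coefficient case. In that setting, every cluster variable $x$ carries a well-defined \emph{$g$-vector} $g(x)\in\ZZ^n$, namely its multidegree under the $\ZZ^n$-grading in which $\deg x_i = e_i$ and $\deg y_j = -b_j$ (the $j$th column of the exchange matrix $B$); this is well-defined precisely because of the Laurent phenomenon, since $[x]^{\Acal}_{\Sigma}$ is a Laurent polynomial in $\xx$ with coefficients that are polynomials in the $y_j$, and one checks the grading is homogeneous.

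The key step is then to show that distinct cluster monomials have distinct $g$-vectors. For a single cluster this is immediate since the $g$-vectors of the cluster variables in one cluster form a basis of $\ZZ^n$ (the $g$-vector matrix at a seed is, up to the mutation dynamics, unimodular), so a cluster monomial $\prod x_i^{a_i}$ in that cluster has $g$-vector $\sum a_i g(x_i)$, and these are pairwise distinct as the $a_i$ range over $\ZZ_{\geq 0}^n$. The global statement — that cluster monomials from \emph{different} clusters also have distinct $g$-vectors — is the substantive input; I would invoke the fact, proved via the additive categorification of $\Acal$ by a $2$-Calabi--Yau (cluster) category, that the $g$-vectors of all cluster monomials are exactly the lattice points of the cones of a complete simplicial fan (the $g$-vector fan), so each lattice point of $\ZZ^n$ is the $g$-vector of at most one cluster monomial. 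Equivalently, one uses that cluster monomials correspond bijectively to isomorphism classes of rigid objects in the cluster category, and the index (which computes the $g$-vector) is injective on rigid objects.

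Granting that distinct cluster monomials have distinct $g$-vectors, the proof finishes by a standard graded-triangularity argument: suppose $\sum_{m} c_m\, m = 0$ is a nontrivial finite linear relation among cluster monomials $m$ with coefficients $c_m \in \ZZ\PP$ not all zero. Grouping by $g$-vector and using that each $g$-vector is attained by at most one $m$, the relation splits into its homogeneous components, forcing each $c_m m = 0$; since each cluster monomial is a nonzero element of the Laurent polynomial ring $\ZZ\PP[\xx^{\pm 1}]$ and $\ZZ\PP$ embeds into this domain, we get $c_m = 0$ for all $m$, a contradiction.

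The main obstacle is the geometric/categorical input in the second paragraph: establishing that the $g$-vectors of \emph{all} cluster monomials across \emph{all} clusters are pairwise distinct. This genuinely requires categorification — it is not a formal consequence of the Laurent phenomenon alone — and it is exactly the step that currently restricts the theorem to cluster algebras defined by a quiver (the skew-symmetric case), where the cluster category of Amiot--Buan--Iyama--Reiten--Todorov--Keller--Reiten is available; the skew-symmetrizable case lacks such a category in general, which is why Conjecture~\ref{conj:ind} remains open there. Everything else — the reduction to principal coefficients, the homogeneity of the grading, and the final triangularity argument — is routine bookkeeping.
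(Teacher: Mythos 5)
This theorem is quoted in the survey from \cite{CKLP} without proof, so there is no in-paper argument to compare against; your sketch is, in outline, a faithful reconstruction of the proof in that reference. The reduction to principal coefficients by separation of additions, the identification of the $g$-vector with the multidegree of a cluster variable under the $\ZZ^n$-grading $\deg x_i = e_i$, $\deg y_j = -b_j$ (homogeneity here is a result of \cite{FZ4}), and the categorical input that distinct cluster monomials have distinct $g$-vectors (injectivity of the index on rigid objects in the generalized cluster category of \cite{A}, due to Plamondon \cite{Plamondon}) are exactly the ingredients used there, and you correctly isolate the $g$-vector statement as the step that confines the result to the skew-symmetric case.

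Two of your steps are stated too quickly. First, the concluding ``split into homogeneous components'' argument does not literally work over $\ZZ\PP$: in the principal-coefficient model the coefficients $c_m$ are Laurent polynomials in the $y_j$, which carry the nonzero degrees $-b_j$, so $c_m m$ is not homogeneous and a homogeneous component of the relation can mix several cluster monomials (e.g.\ $y^a m_1$ and $m_2$ land in the same component whenever $g(m_2)-g(m_1)=-Ba$). The repair, as in \cite{CKLP}, is to write each cluster monomial as $x^{g(m)}F_m(\hat y_1,\dots,\hat y_n)$ with $F_m$ a polynomial of constant term $1$ in the degree-zero elements $\hat y_j = y_j\prod_i x_i^{b_{ij}}$, and to extract the lowest-order terms in the $y_j$ (a dominance-order argument): the surviving terms $x^{g(m)}$ are pairwise distinct Laurent monomials, which forces all coefficients to vanish. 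Second, your assertion that the $g$-vectors of the variables in a single cluster form a basis of $\ZZ^n$ is true but is itself a nontrivial sign-coherence/unimodularity statement, not a formality; in the skew-symmetric case it too is supplied by the categorification rather than by direct computation. With these two points made precise, your argument is complete and is essentially the argument of \cite{CKLP}.
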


The following conjecture has long been a part of the cluster
algebra folklore, and it implies both Conjecture \ref{conj:pos}
and Conjecture \ref{conj:ind}.

\begin{conjecture} 
[\emph{Strong Positivity Conjecture}]
\label{conj:strong}
Any cluster algebra has an additive basis $\BB$ which
\begin{itemize}
\item includes the cluster monomials, and 
\item has nonnegative structure constants, that is, when one
writes the product of any two elements in $\BB$ in terms of $\BB$,
the coefficients are positive.
\end{itemize}
\end{conjecture}


One of the most striking results about cluster algebras is that 
the classification of the \emph{finite type} cluster algebras is parallel
to the Cartan-Killing classification of complex simple Lie algebras.
In particular, finite type cluster algebras are classified by 
Dynkin diagrams.

\begin{definition}
[\emph{Finite type}]
We say that a cluster algebra is of {\it finite type} if it has finitely many seeds.
\end{definition}

It turns out that the classification of finite type cluster algebras
is parallel to the Cartan-Killing classification of complex simple Lie algebras
\cite{FZ2}.
More specifically, define the \emph{diagram} $\Gamma(B)$
associated to an $n \times n$ exchange matrix $B$ to
be a weighted directed graph on 
nodes $v_1,\dots,v_n$,
with $v_i$ 
directed towards $v_j$ if and only if
$b_{ij}>0$.  In that case, we label this edge  by
$| b_{ij} b_{ji}|.$ 

\begin{theorem} \cite[Theorem 1.8]{FZ2}
\label{th:finite}
The cluster algebra $\Acal$ 
is of finite type if and only if
it has a seed $(\xx, \yy, B)$ such that
$\Gamma(B)$ is  an orientation of a finite type Dynkin diagram.
\end{theorem}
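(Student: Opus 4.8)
The plan is to prove Theorem~\ref{th:finite} by reducing it to a finite, verifiable classification problem, following the strategy of Fomin--Zelevinsky in \cite{FZ2}. The key conceptual point is that finiteness of the number of seeds is controlled entirely by the combinatorics of the exchange matrix~$B$ (equivalently, the diagram $\Gamma(B)$), not by the cluster or coefficient data; this is because the matrix mutation rule \eqref{eq:matrix-mutation} does not involve $\xx$ or $\yy$, so the set of matrices obtainable from~$B$ by mutation is a mutation invariant, and one first shows that $\Acal$ has finitely many seeds if and only if the mutation class of~$B$ is finite. The ``if'' direction of this reduction uses the Laurent phenomenon (Theorem~\ref{Laurent}) together with the fact that distinct seeds with the same exchange matrix would force infinitely many cluster variables in a fixed Laurent polynomial ring, which is impossible; the ``only if'' direction is immediate.

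Granting this reduction, the proof splits into two halves. \textbf{Sufficiency:} if some seed has $\Gamma(B)$ an orientation of a finite-type Dynkin diagram, one must show the mutation class of~$B$ is finite. Here I would invoke the connection with the root system: to a Dynkin diagram of type $X_n$ one associates the matrix $B = B(X_n)$ built from the Cartan matrix, and one shows by an explicit (finite) analysis that every matrix mutation-equivalent to $B(X_n)$ again has a diagram which is an orientation of the \emph{same} Dynkin diagram (this is the ``all seeds look alike'' phenomenon in finite type), so the class is finite; alternatively one exhibits a finite model for the seeds via the almost positive roots $\Phi_{\geq -1}$ and the associated generalized associahedron, in which clusters correspond to the maximal cones and mutations to the facet-flips, manifestly finite in number. \textbf{Necessity:} if no seed of $\Acal$ has such a diagram, one shows the mutation class is infinite. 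The mechanism is a dichotomy for diagrams: a connected diagram $\Gamma(B)$ is said to be of finite ``mutation type'' in the relevant sense precisely when it is mutation-equivalent to a Dynkin orientation, and one proves that any connected diagram not of this form is mutation-equivalent to a diagram containing, as an induced subdiagram, one of an explicit list of ``minimal infinite'' diagrams (the affine/extended Dynkin diagrams, together with a short list of exceptional rank-$2$ and rank-$3$ offenders), each of which is easily seen by hand to have an infinite mutation class; since containing a subdiagram with infinite mutation class forces the ambient class to be infinite, we are done.

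The main obstacle is the necessity direction, and within it the classification of minimal diagrams of infinite mutation type. This is the genuinely hard combinatorial core: one must enumerate all connected diagrams that are (a) not mutation-equivalent to a Dynkin orientation and (b) minimal with this property, and verify each produces infinitely many distinct mutation-classes of matrices. Low-rank cases ($n=2$: any label $b_{12}b_{21} \geq 4$ already gives an infinite class; $n=3$) can be checked directly, and the inductive step ``add one node'' is where the affine Dynkin diagrams $\widetilde{A}_n, \widetilde{D}_n, \widetilde{E}_n$ enter as the critical examples. The remaining steps --- the reduction to matrix mutation classes via the Laurent phenomenon, and the sufficiency direction using the finite associahedron model --- are comparatively routine once the classification is in hand, so I would front-load the effort on establishing the finite/infinite dichotomy for diagrams and treat the rest as bookkeeping.
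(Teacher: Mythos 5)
The paper does not prove this theorem; it is quoted from \cite{FZ2} (Theorem 1.8 there), so there is no in-paper argument to compare against. Judged on its own terms, your proposal reproduces some of the genuine architecture of the Fomin--Zelevinsky proof (the rank-$2$ analysis, the classification of minimal ``infinite'' diagrams, the almost-positive-roots/associahedron model), but it rests on a reduction that is false. You claim as the ``key conceptual point'' that $\Acal$ has finitely many seeds if and only if the mutation class of the exchange matrix $B$ is finite. The forward implication is trivially true, but the converse fails: take $n=2$ and $B=\left(\begin{smallmatrix}0&2\\-2&0\end{smallmatrix}\right)$. Its matrix mutation class has at most two elements, yet the associated cluster algebra has infinitely many cluster variables and hence infinitely many seeds. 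More generally, the paper itself discusses (in the section on applications of Teichm\"uller theory) the mutation-finite cluster algebras classified by Felikson--Shapiro--Tumarkin, which include all cluster algebras from surfaces --- almost all of which are mutation-finite but of infinite type. Your supporting argument via the Laurent phenomenon also does not work: a fixed Laurent polynomial ring contains infinitely many elements, so nothing prevents infinitely many distinct cluster variables from living in $\ZZ\PP[\xx^{\pm 1}]$.

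This gap propagates into both halves of your proof. For sufficiency, ``Dynkin orientation $\Rightarrow$ finite mutation class of $B$'' does not yield finite type; the only viable route is the one you mention as an ``alternative,'' namely the bijection between cluster variables and almost positive roots, and that bijection is the hard core of \cite{FZ2}, not bookkeeping. For necessity, ``not mutation-equivalent to a Dynkin orientation $\Rightarrow$ infinite mutation class of diagrams'' is again false (the same rank-$2$ example, which is exactly one of your ``minimal offenders,'' has a finite diagram class). The correct invariant is not finiteness of the mutation class but \emph{2-finiteness}: every matrix $B'$ mutation-equivalent to $B$ satisfies $|b'_{ij}b'_{ji}|\le 3$. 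One shows that finite type forces 2-finiteness by proving directly that a rank-$2$ cluster algebra with $|b_{12}b_{21}|\ge 4$ has infinitely many cluster variables (an argument about the variables themselves, e.g.\ via degree growth, not about matrices), and then that 2-finiteness of the mutation class is equivalent to being mutation-equivalent to a Dynkin orientation --- this is where your diagram classification correctly belongs. If you replace ``finite mutation class'' by ``2-finite mutation class'' throughout and acknowledge that both directions require arguments at the level of cluster variables, the outline becomes a faithful sketch of the actual proof.
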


If the conditions of Theorem \ref{th:finite} hold, we say that 
$\Acal$ is of {\it finite type.}  And in that case 
if $\Gamma(B)$ is an orientation 
of a Dynkin diagram of type
$X$ (here $X$ belongs to one of the infinite series $A_n$, $B_n$, $C_n$, $D_n$, or to one of the 
exceptional types $E_6$, $E_7$, $E_8$, $F_4$, $G_2$),  we say that the 
cluster algebra $\Acal$ is of type $X$.

We define the \emph{exchange graph} of a cluster algebra to be the graph whose vertices
are the (unlabeled) seeds, and whose edges connect pairs of seeds which are connected by 
a mutation.
When a cluster algebra is of finite type, its exchange graph has a remarkable
combinatorial structure.

\begin{theorem}\cite{CFZ}
\label{th:assoc}
Let $\Acal$ be a cluster algebra of finite type.  Then its exchange graph is the 
$1$-skeleton of a convex polytope.
\end{theorem}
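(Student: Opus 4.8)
The plan is to prove that the exchange graph of a finite type cluster algebra $\Acal$ is the $1$-skeleton of a convex polytope by constructing that polytope explicitly as the \emph{generalized associahedron}. By Theorem \ref{th:finite}, we may assume $\Acal$ has a seed whose diagram $\Gamma(B)$ is an orientation of a Dynkin diagram of type $X$, so it suffices to handle each Cartan--Killing type. First I would recall the root-theoretic parametrization of cluster variables in finite type: by the structural results of \cite{FZ2}, the cluster variables of $\Acal$ are in natural bijection with the \emph{almost positive roots} $\Phi_{\geq -1} = \Phi_{>0} \cup (-\Pi)$ of the root system $\Phi$ of type $X$, and the clusters correspond to the maximal \emph{compatible} subsets of $\Phi_{\geq -1}$, where compatibility is governed by Fomin--Zelevinsky's compatibility degree. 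Thus the exchange graph is already identified, purely combinatorially, with the dual graph of the \emph{cluster fan} $\mathcal{F}_X$ whose maximal cones are spanned by the clusters of almost positive roots.

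The core of the argument is then to realize $\mathcal{F}_X$ as the normal fan of a convex polytope $P_X$ living in the ambient space of the root lattice (tensored with $\R$). The recipe, following Chapoton--Fomin--Zelevinsky \cite{CFZ}, is to take a suitable ``deformation'' of the Coxeter permutohedron: one fixes the decomposition $\Pi = \Pi_+ \sqcup \Pi_-$ of the simple roots into the two parts of the bipartite structure underlying the $\Gamma(B)$ orientation, uses this to define the involutive piecewise-linear ``tau'' maps $\tau_\pm$ on $\Phi_{\geq -1}$, and then writes down an explicit system of linear inequalities indexed by $\Phi_{\geq -1}$ whose facet normals are exactly the almost positive roots. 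Concretely, $P_X = \{\, z : \langle z, \alpha \rangle \leq f(\alpha) \ \text{for all } \alpha \in \Phi_{\geq -1}\,\}$ for an explicit right-hand-side function $f$ built from the fundamental weights and the combinatorics of the bipartite Coxeter element; one then checks that this is bounded (it is sandwiched between two permutohedra), full-dimensional, simple, and that its vertices are in bijection with the clusters, with two vertices joined by an edge precisely when the corresponding clusters are related by a single mutation.

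The main obstacle, and the step that absorbs essentially all the work, is verifying that the normal fan of $P_X$ coincides with the cluster fan $\mathcal{F}_X$ --- equivalently, that every cluster cone is actually a cone of the normal fan, that the defining inequalities are all facet-defining (none is redundant), and that the resulting polytope is simple of the right dimension. This requires a hands-on analysis of the linear algebra of the $\tau_\pm$ maps and the compatibility-degree function, and it is genuinely type-dependent in its details, with the exceptional types $E_6, E_7, E_8, F_4, G_2$ needing either a uniform argument via Coxeter-theoretic input or, failing that, case checks. Once the normal fan is identified with $\mathcal{F}_X$, the conclusion is immediate: the dual graph of a simplicial fan that is the normal fan of a simple polytope $P_X$ is exactly the $1$-skeleton of $P_X$, and we have already matched that dual graph with the exchange graph of $\Acal$. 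A secondary (but routine) point to address is that the exchange graph is well-defined independently of the chosen initial seed --- this follows from the finite-type classification, since all seeds of a given finite type cluster algebra are mutation-equivalent and the root-theoretic model does not privilege the initial seed. One should also note that the construction recovers the ordinary associahedron in type $A_n$, consistent with Figure \ref{fig:associahedron} and the discussion of the flip-graph in Section \ref{sec:A}.
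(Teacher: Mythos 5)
Your proposal is correct and follows essentially the same route as the cited source \cite{CFZ}: identify the exchange graph with the dual graph of the cluster fan on almost positive roots, then realize that fan as the normal fan of a simple polytope cut out by inequalities with normals in $\Phi_{\geq -1}$ and an explicit support function compatible with the $\tau_\pm$ maps. The paper itself gives no proof beyond this citation, so there is nothing further to compare; the only point worth making explicit in a full write-up is that the fact that the cluster cones form a complete simplicial fan is itself a prior theorem of Fomin--Zelevinsky rather than part of the polytopal construction.
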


When $\Acal$ is a cluster algebra of type $A$, its exchange graph is the $1$-skeleton of 
a famous polytope called the 
\emph{associahedron}, see Figure \ref{fig:associahedron}.  Therefore the polytopes
that arise from finite type cluster algebras as in Theorem \ref{th:assoc} are called
\emph{generalized associahedra}.

\section{Cluster algebras in Teichm\"uller theory}
\label{sec:Teichmuller}

In this section we will explain how cluster algebras 
had already appeared implicitly in Teichm\"uller theory,  before the 
introduction of cluster algebras themselves.  
In particular, we will associate a cluster
algebra to any \emph{bordered surface with marked points}, following
work of Fock-Goncharov \cite{FG1}, Gekhtman-Shapiro-Vainshtein \cite{GSV},
and Fomin-Shapiro-Thurston \cite{FST}.
This construction provides a natural generalization of the type A 
cluster algebra  from
Section \ref{sec:A}, and 
realizes the \emph{lambda lengths} (also called \emph{Penner coordinates}) 
on the decorated 
Teichm\"uller space associated to a cusped surface,
which Penner had defined in 1987 \cite{Penner}.
We will also 
briefly discuss the Teichm\"uller space of a surface with 
oriented geodesic boundary
and related spaces of laminations, and how these spaces are related to 
cluster theory. 
For more details on the Teichm\"uller and lamination spaces, see \cite{FG3}.

\subsection{Surfaces, arcs, and triangulations}

\begin{definition}
[\emph{Bordered surface with marked points}]
\label{def:bordered}
Let $S$ be a connected oriented 2-dimensional Riemann surface with
(possibly empty)
boundary.  Fix a nonempty set $M$ of {\it marked points} in the closure of
$S$ with at least one marked point on each boundary component. The
pair $(S,M)$ is called a \emph{bordered surface with marked points}. Marked
points in the interior of $S$ are called \emph{punctures}.  
\end{definition}
 
For technical reasons we require that $(S,M)$ is not
a sphere with one, two or three punctures;
a monogon with zero or one puncture; 
or a bigon or triangle without punctures.


\begin{definition}
[\emph{Arcs and boundary segments}]
An \emph{arc} $\zg$ in $(S,M)$ is a curve in $S$, considered up
to isotopy, such that: 
the endpoints of $\zg$ are in $M$;
$\zg$ does not cross itself, except that its endpoints may coincide;
except for the endpoints, $\zg$ is disjoint from $M$ and
  from the boundary of $S$; and
$\zg$ does not cut out an unpunctured monogon or an unpunctured bigon. 

A \emph{boundary segment} is a curve that connects two
marked points and lies entirely on the boundary of $S$ without passing
through a third marked point.

\end{definition}
Let $A(S,M)$ and $B(S,M)$ denote the sets of arcs and boundary 
segments in $(S,M)$.  Note that $A(S,M)$ and 
$B(S,M)$ are disjoint.

\begin{definition}
[\emph{Compatibility of arcs, and triangulations}]
We say that arcs $\zg$ and $\zg'$ are \emph{compatible}
if there exist curves
$\za$ and $\za'$ isotopic to $\zg$ and $\zg'$,  
such that $\za$ and $\za'$ do not cross.
A \emph{triangulation} is a maximal collection of
pairwise compatible arcs (together with all boundary segments). 
The arcs of a 
triangulation cut the surface into \emph{triangles}. 
\end{definition}

There are two types of triangles: 
triangles that have three distinct sides, and \emph{self-folded triangles}
that have only two.   
Note that a self-folded triangle
consists of a loop, together with an arc to an enclosed puncture, 
called a \emph{radius}, 
see Figure \ref{fig:selffolded}.
\begin{figure}[h]
\centering
\includegraphics[height=1.2in]{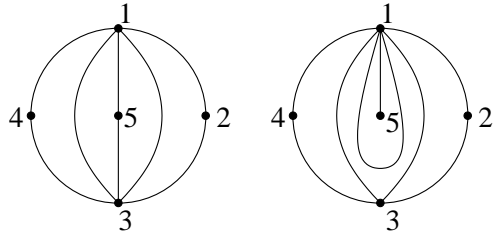}
\caption{Two triangulations of a once-punctured polygon.  The triangulation
at the right contains a self-folded triangle.}
\label{fig:selffolded}
\end{figure}

\begin{definition}
[\emph{Flips}]
A \emph{flip} of a triangulation $T$ replaces a single arc $\gamma\in T$ 
by a (unique) arc $\gamma' \neq \gamma$
that, together with the remaining arcs in $T$, forms a new 
triangulation.
\end{definition}

In Figure \ref{fig:selffolded}, the triangulation at the right
is obtained from the triangulation at the left by flipping the 
arc between the marked points $3$ and $5$.  However, 
a radius inside a self-folded triangle
in $T$ cannot be flipped (see e.g. the arc between $1$ and $5$ at the 
right).

\begin{prop}\cite{Harer, Hatcher, Mosher}
Any two triangulations of a bordered surface are related by a sequence of flips.
\end{prop}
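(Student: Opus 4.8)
The plan is to reduce the statement to a single-arc version --- ``any triangulation can be flipped to one containing a prescribed arc'' --- and then establish that version by induction on an intersection number. To set things up I would first fix a complete finite-area hyperbolic metric on $S$ with a cusp at each point of $M$; this exists precisely because $(S,M)$ is assumed not to be one of the small exceptional cases. Then every arc has a unique geodesic representative, any two arcs are automatically in minimal position, and there is a well-defined geometric intersection number $i(\za,\zg)\in\ZZ_{\ge 0}$ with $i(\za,\zg)=0$ if and only if $\za$ and $\zg$ are compatible. For an arc $a$ and a triangulation $T$ I write $i(a,T)=\sum_{c\in T}i(a,c)$. I would also use the elementary fact (from a standard Euler-characteristic count) that all triangulations of $(S,M)$ have the same number $n$ of arcs.

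The reduction I have in mind rests on the following claim $(\ast)$: for every triangulation $T$, every arc $a$, and every set $\{a_1,\dots,a_r\}\subseteq T$ of arcs each compatible with $a$, there is a sequence of flips --- none of which flips any $a_i$ --- carrying $T$ to a triangulation that contains $a$. Granting $(\ast)$, the theorem follows by bootstrapping: given triangulations $T$ and $T'=\{b_1,\dots,b_n\}$, apply $(\ast)$ with $a=b_1$, $r=0$ to reach $T^{(1)}\ni b_1$; then with $a=b_2$ and $\{a_1\}=\{b_1\}$ --- legitimate since $b_1,b_2\in T'$ are compatible --- to reach $T^{(2)}\supseteq\{b_1,b_2\}$; and so on. After $n$ steps I land on a triangulation containing all of $b_1,\dots,b_n$, hence equal to $T'$ since triangulations all have $n$ arcs.

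To prove $(\ast)$ I would induct on $i(a,T)$. If $i(a,T)=0$ then $a$ is compatible with everything in $T$, so $a\in T$ by maximality and there is nothing to do. If $i(a,T)>0$, I regard $a$ as a path transverse to $T$, cut by its crossings into segments each lying in a single triangle of $T$, and I pick an \emph{innermost corner-cutting segment} $s$: one lying in a triangle $\Delta$ and bounding, together with part of $\partial\Delta$ running through a single vertex of $\Delta$, an embedded disk $D\subset\Delta$ with $a\cap D=s$ (its existence coming from the usual innermost-disk argument begun at an endpoint of $a$). If $c$ denotes the side of $\Delta$ that $s$ crosses, then $c$ is a genuine arc of $T$ --- boundary segments are never crossed --- and $c\notin\{a_1,\dots,a_r\}$, since those are compatible with $a$ but $c$ is not. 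I would then flip $c$ and argue that $i(a,\mu_c(T))<i(a,T)$, so that the inductive hypothesis applies to $\mu_c(T)$ (the arcs $a_1,\dots,a_r$ survive and are still compatible with $a$).

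The step I expect to be the main obstacle is showing that this flip strictly decreases the intersection number: this needs a careful local analysis in the quadrilateral $Q$ formed by the triangle(s) of $T$ incident to $c$, verifying that replacing $c$ by the other diagonal $c'$ eliminates the crossing along $s$ and creates no new crossings with $a$ --- innermostness of $D$ is exactly what makes this go through, but checking the various degenerate local pictures is the delicate part. A genuinely separate difficulty is that the selected arc $c$ may be the radius of a self-folded triangle, which the definition does not permit us to flip; there I would first flip the enclosing loop of that self-folded triangle (which \emph{is} flippable), check on the short list of local configurations that this preliminary move does not disrupt the induction, and so reduce to the generic situation --- or, alternatively, arrange from the start that neither $T$ nor $T'$ contains a self-folded triangle. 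One could instead sidestep this combinatorics entirely by invoking the cell decomposition of the decorated Teichm\"uller space $\TTT(S,M)$ from Section~\ref{sec:Teichmuller}, whose top-dimensional cells are indexed by ideal triangulations and whose codimension-one walls correspond to flips, so that connectivity of the flip graph is inherited from connectivity of $\TTT(S,M)$; but that route imports considerably more machinery than the hands-on argument above.
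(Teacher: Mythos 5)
The paper offers no proof of this proposition---it is quoted from Harer, Hatcher and Mosher, whose arguments run through the contractibility of the arc complex or through cell decompositions of (decorated) Teichm\"uller space; the ``heavier'' alternative you mention at the end is in fact closest to what the cited sources actually do. Your main line of attack---reduce to ``any triangulation can be flipped to one containing a prescribed arc'' and induct on the geometric intersection number $i(a,T)$---is the standard elementary route and can be made to work, so the architecture of the argument (the bootstrap via $(\ast)$, the use of geodesic representatives to get minimal position, the equicardinality of triangulations) is sound.

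The gap is exactly where you predict it, and as written the recipe is not yet even well-defined. A corner-cutting segment $s$ of $a$ inside a triangle $\Delta$ crosses \emph{two} sides of $\Delta$ (the two adjacent to the cut-off vertex), so ``the side $c$ of $\Delta$ that $s$ crosses'' does not single out an arc; and the choice matters, because flipping an arbitrary arc crossed by $a$ can strictly \emph{increase} $i(a,T)$ (in the quadrilateral $Q$ with diagonals $c,c'$, the change $i(a,c')-i(a,c)$ is a difference of counts of corner-cutting segments at the two pairs of opposite vertices, and it has no fixed sign). The usual correct choice is the \emph{first} arc of $T$ crossed by $a$ as one travels from an endpoint $p\in M$ of $a$; one then shows, using that no half-bigon based at $p$ can occur in minimal position, that the new diagonal emanates from $p$ and that the crossing count drops. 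Even then the local analysis must handle the degenerate quadrilaterals that occur on punctured surfaces (two triangles glued along two sides, and self-folded triangles whose radii are unflippable); your proposed fixes---flip the enclosing loop first, or avoid self-folded triangles in $T$ and $T'$---are the right instincts, but the second does not suffice because intermediate triangulations in your induction can acquire self-folded triangles, and the first needs the observation that any arc crossing a radius must also cross the enclosing loop. Until the monotonicity lemma is actually proved for a well-specified flip, the induction does not close.
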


\subsection{Decorated Teichm\"uller  space}
In this section we assume that the reader is familiar with 
some basics of hyperbolic geometry.
\begin{definition}
[\emph{Teichm\"uller space}]
Let $(S,M)$ be a bordered surface with marked points.  The 
(cusped) 
\emph{Teichm\"uller space} $\T(S,M)$ 
consists of all complete finite-area hyperbolic metrics
with constant curvature $-1$ on 
$S\setminus M$, with geodesic boundary at 
$\del S \setminus M$, considered up to 
$\Diff_0(S,M)$, diffeomorphisms of $S$ fixing $M$ that are homotopic to the 
identity.
(Thus there is a cusp at each point of $M$: points at $M$ ``go off 
to infinity," while the area remains bounded.)
\end{definition}


For a given hyperbolic metric in 
$\T(S,M)$, each arc can be represented by a unique geodesic.  Since there
are cusps at the marked points, such a geodesic segment is of infinite 
length.  So if we want to measure the ``length" of a geodesic arc between
two marked points, we need to renormalize.  

To do so, around each cusp $p$ we choose a \emph{horocycle},
which may be viewed as the set of points at an equal distance from $p$.
Although the cusp is infinitely
far away from any point in the surface, there is still a well-defined
way to compare the distance to $p$ from two different points in the surface.
A horocycle can also be characterized as a curve perpendicular to 
every geodesic to $p$.
See Figure \ref{fig:Ptolemy} for a depiction of some points and 
horocycles, drawn in the hyperbolic plane.

The notion of horocycle leads to the following definition.
\begin{definition}
[\emph{Decorated Teichm\"uller space}] A point in a decorated 
Teichm\"uller space $\TTT(S,M)$ is a hyperbolic metric as above
together with a collection of horocyles $h_p$, one around each cusp corresponding to a marked point $p\in M$.
\end{definition}

One may parameterize decorated Teichm\"uller space using 
\emph{lambda lengths} or \emph{Penner coordinates}, 
as introduced and developed by Penner 
\cite{Penner, Penner2}.

\begin{definition}
[\emph{Lambda lengths}] 
\cite{Penner2}
Fix $\sigma \in \TTT(S,M)$.  Let
$\gamma$ be an arc or a boundary segment.  Let $\gamma_{\sigma}$
denote the geodesic representative of $\gamma$ (relative to $\sigma$).
Let $\ell(\gamma) = \ell_{\sigma}(\gamma)$ be the signed distance
along $\gamma_{\sigma}$ between the horocycles at either end of 
$\gamma$ (positive if the two horocycles do not intersect, and negative
if they do).  The \emph{lambda length}
$\lambda(\gamma) = \lambda_{\sigma}(\gamma)$ of $\gamma$ is defined by
\begin{equation}
\lambda(\gamma) = \exp(\ell(\gamma)/2).
\end{equation}
\end{definition}

Given $\gamma \in A(S,M) \cup B(S,M)$, one may view the lambda length
\begin{equation*}
\lambda(\gamma): \sigma \mapsto \lambda_{\sigma}(\gamma)
\end{equation*}
as a function on the decorated Teichm\"uller space
$\TTT(S,M)$.  Let $n$ denote the number of arcs in a
triangulation of $(S,M)$; recall that $c$ denotes the number of marked points
on the boundary of $S$.  Penner showed that
if one fixes a triangulation $T$, then 
the lambda lengths of the  arcs of $T$ and the boundary segments
can be used to parameterize 
$\TTT(S,M)$:
\begin{theorem}\label{decTeich}
For any triangulation $T$ of $(S,M)$, the map
\begin{equation*}
\prod_{\gamma \in T \cup B(S,M)} \lambda(\gamma): \TTT(S,M) \to \R_{>0}^{n+c}
\end{equation*}
is a homeomorphism.
\end{theorem}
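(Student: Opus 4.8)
\textbf{Proof proposal for Theorem \ref{decTeich}.}

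The plan is to exhibit an explicit two-sided inverse to the lambda-length map by building, from an arbitrary assignment of positive real numbers to the arcs of $T$ and the boundary segments, a decorated hyperbolic metric realizing exactly those lambda lengths, and to argue that this metric is unique up to $\Diff_0(S,M)$. First I would recall the upper-half-plane model and the fundamental local computation: given an ideal triangle with a horocycle chosen at each of its three ideal vertices, the three lambda lengths $a,b,c$ of its sides are completely unconstrained positive reals, and conversely any triple $(a,b,c)\in\R_{>0}^3$ is realized by a unique (up to isometry) horocycle-decorated ideal triangle. This is the Penner ``$h$-length'' computation; the key identity is that the signed distance along a side between the two horocycles can be prescribed freely by sliding the horocycles, and that the isometry type of the decorated triangle is pinned down by these three numbers. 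I would state this as a lemma and use it as the atom of the construction.

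Next I would assemble the global metric by gluing. Fix a triangulation $T$; its arcs and boundary segments carry the prescribed positive reals. Cut $S\setminus M$ along the arcs of $T$ into ideal triangles (here one uses that $T$ cuts the surface into triangles, as stated in the definition of triangulation, with the self-folded triangles handled by treating the loop and the radius with their two lambda lengths). Realize each piece as a decorated ideal triangle with side lambda lengths dictated by $T$, using the lemma. Because the lambda length of an arc $\gamma\in T$ is the \emph{same} number viewed from either of the two triangles adjacent to $\gamma$ --- and the decorated-triangle lemma says the horocyclic decoration near an ideal vertex is determined by the lambda lengths of the two incident sides --- the horocycles match across each gluing edge, so the pieces glue to a well-defined complete finite-area hyperbolic metric on $S\setminus M$ together with a horocycle $h_p$ at each cusp $p$. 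Geodesic boundary at $\partial S\setminus M$ comes for free from gluing along boundary segments. This produces a point $\sigma\in\TTT(S,M)$ whose lambda lengths along $T\cup B(S,M)$ are the prescribed tuple, so the map is surjective.

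For injectivity, suppose $\sigma,\sigma'\in\TTT(S,M)$ have the same lambda lengths on every arc of $T$ and every boundary segment. Each arc of $T$ has a unique geodesic representative in each metric, and $T$ decomposes $S\setminus M$ into ideal triangles for both; the decorated-triangle lemma forces each corresponding pair of decorated triangles to be isometric by an isometry respecting the labelling of ideal vertices, and these local isometries are compatible along the gluing edges (again because the matching is governed by the shared lambda lengths). Patching them yields an isometry $S\setminus M\to S\setminus M$ carrying the decoration of $\sigma$ to that of $\sigma'$ and fixing $M$; one checks it is isotopic to the identity since it preserves the isotopy class of every arc of $T$ and $T$ fills the surface. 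Hence $\sigma=\sigma'$ in $\TTT(S,M)$. Continuity of the map and of its inverse is then routine --- lambda lengths vary real-analytically in the metric, and the gluing construction depends continuously on the input tuple --- giving the homeomorphism.

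\textbf{Main obstacle.} The real content, and the step I expect to require the most care, is the decorated ideal triangle lemma together with the verification that the gluing is consistent: one must check that the horocyclic decoration a triangle induces near an ideal vertex depends only on the two lambda lengths of the sides meeting there (not on the third side), so that adjacent triangles sharing an arc automatically agree on the horocycle along that arc. Equivalently, this is the statement that the ``$h$-lengths'' (horocyclic segment lengths cut off at a vertex) are determined by, and determine, the side lambda lengths via Penner's formula $h = \tfrac{a}{bc}$ for the segment opposite to the vertex between sides $b,c$; establishing this formula and reading off the gluing compatibility from it is the crux. The self-folded triangle case requires a separate but analogous bookkeeping, since there the ``loop'' is glued to itself; this is where the technical exclusions on $(S,M)$ and Penner's original treatment are invoked.
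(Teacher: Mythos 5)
The paper does not actually prove Theorem \ref{decTeich}: it quotes the result, attributing it to Penner \cite[Theorem 3.1]{Penner}, \cite[Theorem 5.10]{Penner2} in the formulation of \cite[Theorem 7.4]{FT}. So there is no in-text argument to compare against; what you have written is a sketch of Penner's original proof (decompose along the geodesic representatives of $T$ into decorated ideal triangles, use the fact that $(a,b,c)\mapsto$ decorated ideal triangle is a bijection onto $\R_{>0}^3$, and reglue), which is exactly the right strategy and is where the cited references go. Your identification of the $h$-length formula $h=a/(bc)$ as the crux is also correct: it is the mechanism by which the shear of the gluing along each arc is pinned down.

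Two points in your sketch are genuinely soft, though. First, the statement that ``the horocyclic decoration near an ideal vertex is determined by the lambda lengths of the two incident sides'' is not right as stated (the horocycle at a vertex of a decorated ideal triangle depends on all three lambda lengths, as your own formula $h=a/(bc)$ shows); the correct gluing argument is that the two triangles adjacent to an arc $\gamma$ each induce horocycles at the endpoints of $\gamma$, that the prescribed common value $\lambda(\gamma)$ forces the two induced signed distances along $\gamma$ to agree, and that the one-parameter shear freedom in the gluing is then used (and used up) to identify the horocycles at one endpoint, after which they automatically match at the other. Second, and more seriously, you assert completeness of the glued metric without justification. Gluing ideal triangles along a triangulation with arbitrary shears generally produces an \emph{incomplete} structure at a puncture; completeness is equivalent to the holonomy around each puncture being parabolic, and in this construction it is guaranteed precisely because the horocyclic arcs contributed by the triangles around each puncture close up into an embedded closed horocycle. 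This step is part of the content of Penner's theorem and should be made explicit. (Your injectivity argument is fine, but note that ``preserves the isotopy class of every arc of $T$, hence is isotopic to the identity'' is an invocation of the Alexander method and deserves a citation rather than ``one checks.'')
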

Note that the first versions of Theorem \ref{decTeich} were due to Penner
\cite[Theorem 3.1]{Penner}, \cite[Theorem 5.10]{Penner2}, but 
the formulation above is from 
\cite[Theorem 7.4]{FT}.

The following ``Ptolemy relation" is an indication that lambda lengths
on decorated Teichm\"uller space are part of a related cluster algebra.

\begin{prop}\cite[Proposition 2.6(a)]{Penner} \label{prop:Pt}
Let $\alpha, \beta, \gamma, \delta \in A(S,M) \cup B(S,M)$
be arcs or boundary segments (not necessarily distinct) that 
cut out a quadrilateral in $S$; we assume that the sides of the quadrilateral,
listed in cyclic order, are 
$\alpha, \beta, \gamma, \delta$.  Let $\eta$ and $\theta$ be the
two diagonals of this quadrilateral.  Then the corresponding 
lambda lengths satisfy the Ptolemy relation
\begin{equation*}
\lambda(\eta) \lambda(\theta) = \lambda(\alpha) \lambda(\gamma) + 
\lambda(\beta) \lambda(\delta).
\end{equation*}
\end{prop}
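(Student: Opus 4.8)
The Ptolemy relation for lambda lengths is a classical fact of hyperbolic
geometry, and the plan is to reduce it to a computation in the upper
half-plane model. First I would lift the configuration to the universal
cover $\mathbb{H}^2$: choose lifts of the four marked points of the
quadrilateral to the boundary circle $\partial \mathbb{H}^2 \cong
\mathbb{R} \cup \{\infty\}$, say to the points $a, b, c, d$ in
cyclic order, together with lifts of the chosen horocycles $h_a, h_b, h_c,
h_d$ at each. The sides $\alpha, \beta, \gamma, \delta$ lift to the
geodesics $(a,b), (b,c), (c,d), (d,a)$, while the diagonals $\eta, \theta$
lift to $(a,c)$ and $(b,d)$. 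Since lambda lengths depend only on the metric
and the horocycles (not on the choice of lift), it suffices to verify the
identity for these six lambda lengths in $\mathbb{H}^2$.

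The key computational input is an explicit formula for the lambda length of
a geodesic between two decorated ideal points in terms of the coordinates.
Normalize using an isometry of $\mathbb{H}^2$ so that one endpoint is at
$\infty$ with horocycle the horizontal line at Euclidean height $t$, and the
other endpoint is at $x \in \mathbb{R}$ with horocycle a Euclidean circle of
diameter $s$ tangent to the real axis at $x$; then a direct integration of
the hyperbolic line element along the vertical geodesic shows that the
signed distance between the horocycles is $\ell = \log(t/s)$ (up to the
renormalization), so that $\lambda$ is, up to the common scaling, $\sqrt{t/s}$ — and more symmetrically, for two finite points $x_i, x_j$ with
horocycles of diameters $s_i, s_j$ one gets $\lambda(\gamma_{ij})^2 =
(x_i - x_j)^2/(s_i s_j)$. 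I would derive this formula once and then
everything becomes bookkeeping.

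With this formula in hand, substitute all six lambda lengths into the
claimed identity $\lambda(\eta)\lambda(\theta) = \lambda(\alpha)\lambda(\gamma) + \lambda(\beta)\lambda(\delta)$. Writing $\lambda(\gamma_{ij}) = |x_i - x_j|/\sqrt{s_i s_j}$ for the four
marked points with coordinates $x_a, x_b, x_c, x_d$ and horocycle
parameters $s_a, s_b, s_c, s_d$, the common denominator $\sqrt{s_a s_b s_c s_d}$ clears uniformly, and the identity reduces to the purely algebraic
statement
\begin{equation*}
|x_a - x_c|\,|x_b - x_d| = |x_a - x_b|\,|x_c - x_d| + |x_b - x_c|\,|x_d - x_a|,
\end{equation*}
which is exactly the classical Ptolemy theorem for four points in cyclic
order on a line (equivalently, after a Möbius transformation, four concyclic
points), and which follows from the elementary identity $(x_a - x_c)(x_b - x_d) = (x_a - x_b)(x_c - x_d) + (x_b - x_c)(x_a - x_d)$ together with the fact that, in cyclic order, the three products
carry consistent signs.

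The main obstacle is not any single step but rather the care required in two
places: first, checking that the normalization by an isometry can be carried
out so that \emph{all four} horocycles are simultaneously in the standard
form used to derive the length formula — this is where one uses that
isometries of $\mathbb{H}^2$ act transitively enough on decorated ideal
triangles, and that lambda length is an isometry invariant; and second,
tracking the signs of $\ell(\gamma)$ (positive when horocycles are disjoint,
negative when they cross) to be sure the absolute values in the algebraic
Ptolemy identity match the signed-distance convention, i.e. that the
geometric quantity $\lambda(\gamma)$ really equals the positive square root
$|x_i - x_j|/\sqrt{s_i s_j}$ in all cases, including when horocycles
overlap. Once those points are settled, the proposition is immediate. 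An
alternative, essentially equivalent route is to invoke Penner's original
derivation \cite{Penner, Penner2} via the lightcone/Minkowski-space model,
in which decorated ideal points correspond to null vectors and lambda
lengths to a bilinear pairing; there the Ptolemy relation becomes a
three-term linear dependence among four null vectors in a $3$-dimensional
space, but the half-plane computation above is self-contained and shorter.
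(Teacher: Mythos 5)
Your argument is correct, and it fills in a proof that the paper itself does not supply: Proposition \ref{prop:Pt} is quoted with a citation to Penner, so there is no in-paper proof to compare against. Your route --- lift the (possibly immersed) quadrilateral to an ideal quadrilateral in the upper half-plane, establish the formula $\lambda(\gamma_{ij}) = |x_i - x_j|/\sqrt{s_i s_j}$ by computing the signed horocycle distance along a vertical geodesic and transporting it by an isometry, and then reduce the claim to the algebraic identity $(x_a - x_c)(x_b - x_d) = (x_a - x_b)(x_c - x_d) + (x_b - x_c)(x_a - x_d)$ with the observation that cyclic order makes all three products positive --- is the standard elementary derivation and is sound. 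The two subtleties you flag are the right ones, and both resolve in your favor: the formula $\lambda = e^{\ell/2} = |x_i - x_j|/\sqrt{s_i s_j}$ holds verbatim when the horocycles overlap (that is exactly the case $|x_i - x_j| < \sqrt{s_i s_j}$, i.e.\ $\ell < 0$), and no simultaneous normalization of the four horocycles is actually needed, since every horocycle at a finite boundary point of $\mathbb{H}^2$ is automatically a Euclidean circle tangent to $\R$. For comparison, Penner's cited proof works in the Minkowski model $\R^{2,1}$, where a decorated ideal point is a vector $u$ on the positive light cone and $\lambda(u,v)=\sqrt{-\langle u,v\rangle}$ (up to normalization); the Ptolemy relation then drops out of a Pl\"ucker-type identity among the pairwise pairings of four null vectors in a three-dimensional space. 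That version generalizes more readily (it is the source of the cluster-algebra exchange relation structure exploited in Theorem \ref{th:surfacecombinatorics}), but your half-plane computation is self-contained and entirely adequate for the statement at hand.
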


\begin{figure}[h]
\centering
\includegraphics[height=1.3in]{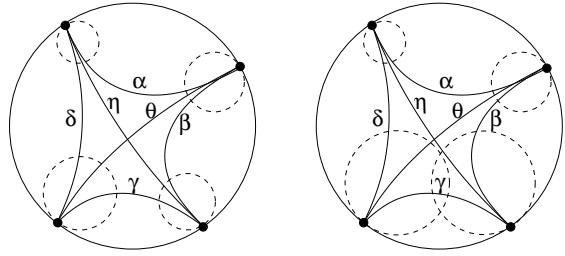}
\caption{Four points and corresponding horocycles, together with 
the arcs forming the geodesics between them, drawn in the hyperbolic plane.  
At the left, all lengths
$\ell(\alpha), \dots, \ell(\theta)$ are  positive; but at the right,
$\ell(\gamma)$ is negative.}
\label{fig:Ptolemy}
\end{figure}

\subsection{The cluster algebra associated to a surface}

To make precise the connection between decorated Teichm\"uller space
and cluster algebras, let us fix a triangulation $T$ of $(S,M)$,
and explain how to 
associate an exchange
matrix $B_T$ to $T$ \cite{FG1, FG2, GSV, FST}.  
For simplicity we assume that $T$ has no self-folded triangles.  It is not 
hard to see that all of the bordered surfaces we are considering admit
a triangulation without self-folded triangles.

\begin{definition}
[\emph{Exchange matrices associated to a triangulation}]
\label{def:adj}
Let $T$ be a triangulation of $(S,M)$.
Let $\tau_1,\tau_2,\ldots,\tau_n$ be 
the $n$ arcs of
$T$, and $\tau_{n+1},\dots,\tau_{n+c}$ be the $c$ boundary segments
of $(S,M)$.
We define 
{\small
\begin{align*}b_{ij} = 
&\#\{\text{triangles with sides } \tau_i \text{ and }\tau_j, 
\text{ with } \tau_j \text{ following } \tau_i
\text{ in clockwise order}\} -\\
&\#\{\text{triangles with sides } \tau_i \text{ and }\tau_j, 
\text{ with } \tau_j \text{ following } \tau_i
\text{ in counterclockwise order}\}.
\end{align*}}
Then we define the \emph{exchange matrix} 
$B_{T}=(b_{ij})_{1\le i\le n, 1\le j\le n}$ and the 
\emph{extended 
exchange matrix} 
$\widetilde{B}_{T}=(b_{ij})_{1\le i\le n+c, 1\le j\le n}$. 
\end{definition}

In Figure \ref{fig:Ptolemy} there is a triangle with sides 
$\alpha$, $\beta$, and $\eta$, and our convention is that 
$\beta$ follows $\alpha$ in clockwise order.  
Note that in order to speak about clockwise 
order, one must be working with an oriented surface.  That is why 
Definition \ref{def:bordered}
requires $S$ to be oriented.

We leave the following as an exercise for the reader;
alternatively, see \cite{FST}.

\begin{exercise}
Extend Definition \ref{def:adj} to the case
that $T$ has self-folded triangles, so that the exchange
matrices transform compatibly with mutation.
\end{exercise}


\begin{remark}\label{rem:2}
Note that each entry  $b_{ij}$ of the exchange matrix (or extended
exchanged matrix) is either
$0,\pm 1$, or $\pm 2$, since every arc $\tau$ is in at most two triangles. 
\end{remark}

\begin{exercise}
Note that $B_{T}$ is skew-symmetric, and hence can be viewed as the 
signed adjacency matrix associated to a quiver.
Verify that this quiver generalizes 
the quiver $Q(T)$ from
Definition \ref{def:QT} associated to a triangulation of a polygon.
\end{exercise}

The following result is proved by interpreting cluster variables as lambda lengths
of arcs, and using the fact that lambda lengths satisfy Ptolemy relations.
It also uses the fact that any two triangulations of $(S,M)$ can be connected
by a sequence of flips.

\begin{theorem}\label{th:surfacecombinatorics}
Let $(S,M)$ be a bordered surface and let $T=(\tau_1,\dots,\tau_n)$ 
be a triangulation of $(S,M)$.
Let $\xx_T=(x_{\tau_1}, \dots, x_{\tau_n})$, and  
let $\Acal = \Acal(\xx_T, B_T)$ be the corresponding cluster algebra.
Then we have the following:
\begin{itemize}
\item Each arc $\gamma \in A(S,M)$ gives rise 
to a cluster variable $x_{\gamma}$;
\item Each triangulation $T$ of $(S,M)$ gives rise to a seed 
$\Sigma_T = (x_T, B_T)$ of $\Acal$;
\item If $T'$ is obtained from $T$ by flipping at $\tau_k$, then
$B_{T'} = \mu_k(B_T)$.
\end{itemize}
It follows that the cluster algebra $\Acal$ does not depend on the 
triangulation $T$, but only on $(S,M)$.  Therefore we refer to 
this cluster algebra as $\Acal(S,M)$.
\end{theorem}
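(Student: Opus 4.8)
The plan is to build the isomorphism from a fixed triangulation and then show it is compatible with flips, so that everything propagates across the entire exchange graph. First I would fix a triangulation $T = (\tau_1,\dots,\tau_n)$ of $(S,M)$ with no self-folded triangles (possible by the remark preceding Definition \ref{def:adj}), and set up the candidate correspondence: to each arc $\gamma$ of $T$ assign the initial cluster variable $x_{\tau_i}$, and to each boundary segment assign the corresponding coefficient variable. The content is that as one mutates the cluster algebra $\Acal(\xx_T, B_T)$ in direction $k$, the resulting new cluster variable equals the lambda length $\lambda(\tau_k')$ of the arc $\tau_k'$ obtained by flipping $\tau_k$ in $T$. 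This is exactly where Proposition \ref{prop:Pt} enters: the flip of $\tau_k$ takes place inside the quadrilateral whose sides are the four arcs/boundary segments surrounding $\tau_k$, and the Ptolemy relation $\lambda(\tau_k)\lambda(\tau_k') = \lambda(\alpha)\lambda(\gamma) + \lambda(\beta)\lambda(\delta)$ is precisely the exchange relation \eqref{exchange relation0} determined by the quiver $Q(T) = Q(B_T)$ — one checks that the arrows of $Q(T)$ incident to $k$, as dictated by Definition \ref{def:adj}, record exactly which pairs of the four surrounding sides form the two monomials $M^+$ and $M^-$.

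Next I would invoke Exercise \ref{ex:flip} in its surface version: flipping a triangulation mutates the associated quiver/exchange matrix, i.e. $B_{T'} = \mu_k(B_T)$. Combined with the previous paragraph, this shows that the seed $\Sigma_{T'} = (\xx_{T'}, B_{T'})$ attached to the flipped triangulation coincides with the seed $\mu_k(\Sigma_T)$ obtained by mutating in the cluster algebra. By induction on the length of a sequence of flips, every seed $\Sigma_{T''}$ arising from a triangulation $T''$ reachable from $T$ is a seed of $\Acal(\xx_T, B_T)$, with its arcs labeled by lambda lengths. Since any two triangulations of $(S,M)$ are connected by a sequence of flips (the proposition attributed to Harer, Hatcher, Mosher), every arc of $(S,M)$ appears in some triangulation reachable from $T$, hence gives rise to a cluster variable $x_\gamma$, and every triangulation gives a seed; this establishes the three bulleted assertions. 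Finally, independence of the triangulation is immediate: starting from $T_1$ or from $T_2$, one reaches the same family of seeds inside $\Fcal$ (connect $T_1$ to $T_2$ by flips), so $\Acal(\xx_{T_1}, B_{T_1}) = \Acal(\xx_{T_2}, B_{T_2})$ as subalgebras of $\Fcal$, up to the obvious relabeling.

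The main obstacle is the bookkeeping around self-folded triangles: Definition \ref{def:adj} as stated assumes $T$ has none, and a radius inside a self-folded triangle cannot be flipped, so the naive "flip $=$ mutation" dictionary breaks down at such configurations. The clean way around this is the one indicated by the exercise following Definition \ref{def:adj}: extend the definition of $B_T$ (using tagged arcs, or equivalently the conventions of \cite{FST}) so that mutation still corresponds to the appropriate local move, and check that the lambda-length/Ptolemy computation still yields the exchange relation in that setting. For the purposes of this expository statement, however, it suffices to note that one may always choose the initial $T$ free of self-folded triangles and that every arc of $(S,M)$ lies in some such triangulation, so the cluster variables and seeds are all captured without ever needing to mutate at a radius; the general compatibility is then a matter of the cited work of Fomin–Shapiro–Thurston. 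A secondary point to verify carefully is the sign/orientation convention in Definition \ref{def:adj} (clockwise versus counterclockwise), to make sure the two monomials in the Ptolemy relation land on the correct sides of the exchange relation — this is routine but must be done consistently with the orientation of $S$.
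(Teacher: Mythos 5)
Your proposal matches the paper's argument: the paper states only that the theorem ``is proved by interpreting cluster variables as lambda lengths of arcs, and using the fact that lambda lengths satisfy Ptolemy relations,'' together with flip-connectivity of triangulations, which is precisely the skeleton you flesh out (Ptolemy relation as exchange relation, flip $=$ mutation, induction along a flip sequence). Your handling of self-folded triangles -- choosing an initial $T$ without them and deferring the general bookkeeping to the tagged-arc machinery of Fomin--Shapiro--Thurston -- is also exactly how the paper sidesteps that issue.
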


\begin{remark}\label{rem:tagged}
Theorem \ref{th:surfacecombinatorics} gives an inclusion of arcs into 
the set of cluster variables of $\Acal(S,M)$.  This inclusion is a 
bijection if and only if $(S,M)$ has no punctures.  
In \cite{FST}, Fomin-Shapiro-Thurston introduced 
\emph{tagged arcs} and \emph{tagged triangulations}, 
which generalize arcs and triangulations, and are in bijection with 
cluster variables and clusters of $\Acal(S,M)$, see
\cite[Theorem 7.11]{FST}.
To each tagged triangulation one may associate an 
exchange matrix, which as before has all entries equal to 
$0$, $\pm 1$, or $\pm 2$.
\end{remark}

Combining Theorem \ref{th:surfacecombinatorics} with 
Theorem \ref{decTeich} and Proposition \ref{prop:Pt},
we may identify the cluster variable $x_{\gamma}$ 
with the 
corresponding lambda length $\lambda(\gamma)$, 
and therefore view
such elements of the cluster algebra
$\Acal$ as functions on $\TTT(S,M)$.
In particular, when one performs a flip in a triangulation,
the lambda lengths associated to the arcs \emph{transform according to 
cluster dynamics}.

It is natural to consider whether there is a nice system of 
coordinates on Teichm\"uller space $\T(S,M)$ itself (as opposed to its 
decorated version.) Indeed, 
if one fixes a point of $\T(S,M)$ and a triangulation 
$T$ of $(S,M)$, 
one may represent $T$ by geodesics and lift it
to an \emph{ideal triangulation} of the upper half plane.
Note that every arc of the triangulation is the diagonal of 
a unique quadrilateral.  The four points of this quadrilateral
have a unique invariant under the action of 
$PSL_2(\R)$, the \emph{cross-ratio}. One may compute the  cross-ratio
by sending three of the four points to $0$, $-1$, and 
$\infty$.  Then the position $x$ of the fourth point is the cross-ratio.
The collection of cross-ratios associated to the arcs in $T$
comprise a system of coordinates on $\T(S,M)$.
And when one performs a flip in the triangulation, the 
coordinates \emph{transform according to coefficient dynamics},
see \cite[Section 4.1]{FG3}.

\subsection{Spaces of laminations and their coordinates}

Several compactifications of Teichm\"uller space have been introduced.  The 
most widely used compactification is due to W. Thurston 
\cite{Thurston, Thurston3}; 
the points at infinity of this compactification correspond to 
\emph{projective measured laminations.}

Informally, a 
measured lamination on $(S,M)$ is a finite
collection of non-self-intersecting and pairwise
non-intersecting weighted curves in $S \setminus M$, considered up to 
homotopy, and modulo a certain equivalence relation.
It is not hard to see why such a lamination $L$ 
might correspond to a limit point of Teichm\"uller space $\T(S,M)$:
given $L$, one may construct
a family of metrics on the surface ``converging to $L$,"
by cutting at each curve in $L$
and inserting a ``long neck."  As the necks get longer and longer,
the length of an arbitrary curve 
in the corresponding metric becomes 
dominated by the number of 
times that curve crosses $L$.  Therefore $L$ represents the limit 
of this family of metrics.

Interestingly, two versions of the  space of laminations on $(S,M)$ --
the space of \emph{rational bounded measured laminations},
and the space of \emph{rational unbounded measured laminations} --
are closely
connected to cluster theory.  
In both cases, one may fix a triangulation $T$ and then use
appropriate coordinates to get a parameterization of the space.
The appropriate coordinates for the space of rational bounded measured
laminations are \emph{intersection numbers}.  
Let $T'$ be a triangulation
obtained from $T$ by performing a flip.  It turns out that 
when one replaces $T$ by $T'$,
the rule for how intersection numbers change is given by 
a \emph{tropical} version of \emph{cluster dynamics}.
On the other hand, the appropriate coordinates for the space
of rational unbounded measured laminations are 
\emph{shear coordinates}.  When one replaces $T$ by $T'$,
the rule for how shear  coordinates change is given by a 
\emph{tropical} version of \emph{coefficient dynamics}.
See \cite{FG3} for more details.

In Table \ref{table:coordinates}, we summarize
the properties of the two versions of Teichm\"uller space,
and the two versions of the space of laminations, together
with their coordinates.
\begin{table}[h]
\small
\begin{tabular}{|l|l|l|}
\hline
Space  & Coordinates & Coordinate transformations \\
\hline
\hline
Decorated Teichm\"uller space  & Lambda lengths & Cluster dynamics \\
\hline
Teichm\"uller space & Cross-ratios & Coefficient dynamics\\
\hline
Bounded measured laminations & Intersection numbers & Tropical cluster dynamics\\
\hline
Unbounded measured laminations & Shear coordinates & Tropical coefficient dynamics\\
\hline
\end{tabular}
\bigskip
\caption{Teichm\"uller and lamination spaces.}
\label{table:coordinates}
\end{table}

\subsection{Applications of Teichm\"uller theory to cluster theory}

The connection between Teichm\"uller theory and cluster algebras
has useful applications to cluster algebras, some of which we discuss
below.

As mentioned in Remark \ref{rem:tagged},
the combinatorics of (tagged) arcs and (tagged)
triangulations gives a concrete way
to index cluster variables and clusters in a cluster algebra
from a surface.
Additionally, the combinatorics of unbounded measured 
laminations gives a concrete way 
to encode the coefficient variables for a cluster algebra, whose coefficient
system is of
geometric type \cite{FT}.  Recall from Section \ref{sec:first}
or Exercise \ref{ex:trop} that the coefficient system
is determined by the bottom $m-n$ rows of the initial extended exchange matrix
$\widetilde{B}$.  However, after one has mutated away from the initial
cluster, one would like an explicit way to read off the resulting 
coefficient variables (short of performing the corresponding 
sequence of mutations).
In \cite{FT}, the authors demonstrated that one may encode the initial
extended exchange matrix by a triangulation \emph{together with a 
lamination}, and that one may compute the coefficient variables
(even after mutating away from the initial cluster)
by using \emph{shear coordinates}.

Note that for a general cluster algebra, 
there is no explicit way to index cluster variables or clusters, or to encode
the coefficients.  A cluster variable is simply a rational function
of the initial cluster variables that is obtained after some 
arbitrary and arbitrarily long sequence of mutations.  Having 
a concrete index set for the cluster variables and clusters,
as in \cite[Theorem 7.11]{FST}, is a powerful tool.  Indeed, this was
a key ingredient in \cite{MSW}, which proved the 
Positivity Conjecture for all cluster algebras from surfaces.

The connection between Teichm\"uller theory and cluster algebras
from surfaces
has also led to important structural results for such cluster algebras.
We say that a cluster algebra has \emph{polynomial growth} if the number
of distinct seeds which can be obtained from a fixed initial seed by
at most $n$ mutations is bounded from above by a polynomial
function of $n$. A cluster algebra has \emph{exponential growth}
if the number of such seeds is bounded from below by an exponentially
growing function of $n$. 
In \cite{FST}, 
Fomin-Shapiro-Thurston classified the cluster algebras from surfaces
according to their growth: there are six infinite families which have
polynomial growth, and all others have exponential growth.

Another structural result relates to the classification of 
mutation-finite cluster algebras.
We say that a matrix $B$ (and the corresponding cluster algebra) 
is \emph{mutation-finite} (or is of 
\emph{finite mutation type}) if its mutation
equivalence class is finite, i.e. only finitely many matrices can be obtained
from $B$ by repeated matrix mutations.
Felikson-Shapiro-Tumarkin gave a 
classification of all skew-symmetric mutation-finite cluster algebras
in \cite{FeSTu}.
They showed that these cluster algebras are the union of the following classes
of cluster algebras:
\begin{itemize}
\item Rank 2 cluster algebras;
\item Cluster algebras from surfaces;
\item One of 11 exceptional types.
\end{itemize}
Note that the above classification may be extended to
all mutation-finite cluster algebras (not necessarily skew-symmetric),
using \emph{cluster algebras from orbifolds} 
\cite{FeSTu2}.

\begin{exercise}
Show that any cluster algebra from a surface is 
mutation-finite.  \emph{Hint: use Remark \ref{rem:tagged}}.
\end{exercise}


\section{Cluster algebras and the Zamolodchikov periodicity conjecture}\label{sec:Z}

The thermodynamic Bethe ansatz  is a tool 
for understanding certain conformal field theories.
In a paper from 1991 \cite{Z}, the physicist 
Al.\ B. Zamolodchikov studied the thermodynamic
Bethe ansatz equations for ADE-related diagonal scattering theories.
He showed that if one has a solution to these
equations, it should also be a solution of a set of 
functional relations called a \emph{Y-system}.
Furthermore, he remarked that based on numerical tests, 
the solutions to the Y-system appeared to be periodic.
This phenomenon is called the \emph{Zamolodchikov periodicity
conjecture}, and has important consequences for 
the corresponding field theory.
Although this conjecture arose in mathematical physics,
we will see that it can be reformulated and proved using 
the framework of cluster algebras.


Note that Zamolodchikov initially stated his conjecture for 
the Y-system of a
 simply-laced Dynkin diagram.  The notion of Y-system and the periodicity
conjecture were subsequently 
generalized 
by Ravanini-Valleriani-Tateo \cite{RVT},
Kuniba-Nakanishi \cite{KN}, Kuniba-Nakanishi-Suzuki \cite{KNS},
Fomin-Zelevinsky \cite{FZY}, etc.
We will first present Zamolodchikov's periodicity conjecture for Dynkin diagrams
$\Delta$
(not necessarily simply-laced),
and then present its extension to pairs $(\Delta, \Delta')$ of Dynkin diagrams. 
Note that the latter conjecture reduces to the former in the case that 
$\Delta' = A_1$.
The conjecture was proved 
for $(A_n, A_1)$ by Frenkel-Szenes \cite{FS} and 
Gliozzi-Tateo \cite{GT}; 
for $(\Delta, A_1)$ (where $\Delta$ is an arbitrary Dynkin diagram) by
Fomin-Zelevinsky \cite{FZY}; and for 
$(A_n, A_m)$ by Volkov \cite{V} and independently by 
Szenes \cite{Sz}.  Finally in 2008, Keller 
proved the conjecture in the general case \cite{Keller1, Keller2}, 
using cluster algebras and their additive categorification via
triangulated categories.  Another proof 
was subsequently given by 
Inoue-Iyama-Keller-Kuniba-Nakanishi \cite{IIKKN1, IIKKN2}.

In Sections \ref{sec:Z1} and \ref{sec:Z2} we will state the periodicity
conjecture for Dynkin diagrams and pairs of Dynkin diagrams, respectively,
and explain how the conjectures may be formulated in terms of 
cluster algebras.  In Section \ref{sec:Z3} we will  discuss how 
techniques from the theory of cluster algebras were used to prove
the conjectures.


\subsection{Zamolodchikov's Periodicity Conjecture for Dynkin diagrams}
\label{sec:Z1}

Let $\Delta$ be a Dynkin diagram with vertex set
$I$.  Let $A$ denote the incidence matrix of $\Delta$, i.e. if $C$ is the
Cartan matrix of $\Delta$ and $J$ the identity matrix of the same size,
then $A = 2J-C$.  
Let $h$ denote the Coxeter number of $\Delta$, 
see Table \ref{table:h}.
\begin{table}[h]
\begin{tabular}{|c||c|c|c|c|c|c|c|c|c|}
\hline
$\Delta$ & $A_n$ & $B_n$ & $C_n$ & $D_n$ & $E_6$ & $E_7$ & $E_8$ & $F_4$ & $G_2$ \\
\hline
$h$ & $n+1$ & $2n$ & $2n$ & $2n-2$ & $12$ & $18$ & $30$ & $12$ & $6$\\
\hline
\end{tabular}
\bigskip
\caption{Coxeter numbers.}
\label{table:h}
\end{table}
\vspace{-.2cm}
\begin{theorem} 
[\emph{Zamolodchikov's periodicity conjecture}]
\label{th:Z1}
Consider the recurrence relation
\begin{equation}\label{eq:Z1}
Y_i(t+1) Y_i(t-1) = \prod_{j\in I} (Y_j(t)+1)^{a_{ij}},\quad t\in \Z.
\end{equation}
All solutions to this system are periodic in $t$ with period 
dividing $2(h+2)$, i.e. 
$Y_i(t+2(h+2)) = Y_i(t)$ for all $i$ and $t$.
\end{theorem}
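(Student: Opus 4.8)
The plan is to translate the $Y$-system recurrence \eqref{eq:Z1} into the coefficient dynamics of a cluster algebra, and then to deduce periodicity from the fact that the corresponding cluster algebra is of finite type, using Theorem~\ref{th:finite} and the structural results it implies. First I would recognize that the right-hand side of \eqref{eq:Z1} is exactly the shape of the $Y$-seed mutation rule \eqref{eq:y-mutation} when the semifield $\PP$ is a universal semifield (so that $y_k \oplus 1 = y_k + 1$), for an exchange matrix built from the bipartite orientation of $\Delta$. Concretely, since $\Delta$ is a tree (hence bipartite), partition the vertex set $I = I_+ \sqcup I_-$ into the two colour classes, and let $B = B(\Delta)$ be the skew-symmetrizable matrix with $b_{ij} = \varepsilon(i) a_{ij}$ where $\varepsilon(i) = +1$ for $i \in I_+$ and $-1$ for $i \in I_-$ (with $a_{ij}$ the incidence matrix entries). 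By construction $\Gamma(B)$ is an orientation of $\Delta$, so by Theorem~\ref{th:finite} the associated cluster algebra is of finite type $\Delta$, and in particular has finitely many seeds.

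The key combinatorial device is the \emph{alternating} mutation sequence. Define $\mu_+ = \prod_{i \in I_+} \mu_i$ and $\mu_- = \prod_{i \in I_-} \mu_i$ (the factors in each product commute, since there are no arrows within a colour class). Mutating at a source turns it into a sink and vice versa, so $\mu_+$ sends $B$ to $-B$ and $\mu_-$ sends $-B$ back to $B$; thus the composite $\mu_- \mu_+$ preserves $B$, and iterating $\mu_+, \mu_-, \mu_+, \mu_-, \dots$ gives a well-defined discrete dynamical system on $Y$-seeds all of which share the exchange matrix $\pm B$. Next I would check, by unwinding \eqref{eq:y-mutation} for this bipartite $B$ in a universal semifield, that the coefficient tuples produced by this alternating sequence satisfy precisely the recurrence \eqref{eq:Z1} after the substitution $Y_i(t) \leftrightarrow y_{i}$ evaluated at the appropriate node of $\TT_n$, with the two colour classes updated on alternate half-steps and the parity of $t$ tracking which of $\mu_+, \mu_-$ was last applied. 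Conversely, any solution of \eqref{eq:Z1} is determined by its values at two consecutive times, hence is realized by such a $Y$-seed trajectory for suitable initial coefficients in the universal semifield; this is the content of the Fomin--Zelevinsky reformulation \cite{FZY}.

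With the dictionary in place, periodicity of \eqref{eq:Z1} becomes periodicity of the sequence of $Y$-seeds under $\mu_-\mu_+$. Since the cluster algebra is of finite type, there are only finitely many seeds, so the sequence of $Y$-seeds is eventually periodic; because $\mu_-\mu_+$ is invertible (mutation is an involution, cf.\ the Exercise), eventual periodicity upgrades to genuine periodicity. It remains to pin down the period. Here I would invoke the finite-type structure theory from \cite{FZ2}: the cluster variables of a type-$\Delta$ cluster algebra with bipartite initial seed are indexed by the almost positive roots, and the Coxeter-like element $\mu_-\mu_+$ acts on this root system through a piecewise-linear analogue of the Coxeter element, whose order is controlled by $h$. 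Tracking the half-step grading carefully yields that the full period of \eqref{eq:Z1} divides $2(h+2)$ — the extra $+2$ coming from the boundary "$-$simple-root'' orbits.

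\textbf{Main obstacle.} The conceptually routine part is the reformulation of \eqref{eq:Z1} as coefficient dynamics; the genuinely hard step is the sharp period bound $2(h+2)$, which is not formal from finiteness alone. It requires the detailed finite-type combinatorics — the bijection between cluster variables (equivalently, the $Y$-variables of \eqref{eq:Z1}) and almost positive roots, together with the analysis of how the bipartite alternating mutation acts on that set and the computation that its order is $h+2$ on the relevant orbit structure. For the non-simply-laced cases one must also verify that the skew-\emph{symmetrizable} (rather than skew-symmetric) matrix $B(\Delta)$ still yields the $Y$-system with the stated incidence matrix $A$, and that the folding relating, say, $A_{2n-1}$ to $B_n$ is compatible with the periodicity statement; I expect this to be the most delicate bookkeeping in the argument.
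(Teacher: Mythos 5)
Your proposal follows essentially the same route the paper sketches for the Fomin--Zelevinsky proof: reformulate \eqref{eq:Z1} as a restricted $Y$-pattern for the bipartite orientation of $\Delta$ under the alternating mutations $\mu_-\mu_+$ (Lemma~\ref{lem:Z}, Lemma~\ref{lem:Z1}, Exercise~\ref{ex:Z1}), and then obtain the sharp bound $2(h+2)$ from the finite-type structure theory of \cite{FZ2} -- the bijection with almost-positive roots together with a tropical version of the statement. You also correctly isolate the genuinely hard step (the exact period, not mere periodicity, which as you note already follows from finiteness of seeds) and the folding reduction for the non-simply-laced cases, so this matches the paper's account.
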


The system of equations  in \eqref{eq:Z1} is called a 
\emph{Y-system.}


Note that any Dynkin diagram is a tree, and hence its set $I$ of vertices
is the disjoint union of two sets $I_+$ and $I_-$ such that there is no 
edge between any two vertices of $I_+$ nor between any two vertices of 
$I_-$.  Define $\epsilon(i)$ to be $1$ or $-1$ based on whether
$i \in I_+$ or $i\in I_-$.
Let $\Q(u)$ be the field of rational functions in the variables
$u_i$ for $i\in I$.  For $\epsilon = \pm 1$, define an 
automorphism $\tau_\epsilon$ by setting
\begin{equation*}
\tau_{\epsilon}(u_i) = 
     \begin{cases}
      u_i \prod_{j\in I} (u_j+1)^{a_{ij}}   &\text{ if  $\epsilon(i) = \epsilon$}\\
      u_i^{-1}                               &\text{ otherwise.}
\end{cases}
\end{equation*}

One may reformulate Zamolodchikov's periodicity conjecture in terms of 
$\tau_{\epsilon}$, as we will see below
in Lemma \ref{lem:Z1}.  
First note that the variables
$Y_i(k)$ on the left-hand side of 
\eqref{eq:Z1} have 
a fixed ``parity"  $\epsilon(i) (-1)^k$.  Therefore 
the Y-system decomposes into two independent systems, 
an even one and an odd one, and it suffices to prove
periodicity for one of them.  Without
loss of generality, we may therefore assume that 
\begin{equation*}
Y_i(k+1) = Y_i(k)^{-1} \text{ whenever } \epsilon(i) = (-1)^k.
\end{equation*}
If we combine this assumption with \eqref{eq:Z1}, we obtain
\begin{equation}
\label{newY} Y_i(k+1)= 
     \begin{cases}
     Y_i(k) {\prod_{j\in I} (Y_j(k)+1)^{a_{ij}}}    
                   &\text{ if  $\epsilon(i) = (-1)^{k+1}$}\\
      Y_i(k)^{-1}          &\text{ if $\epsilon(i)=(-1)^k$.} 
\end{cases}
\end{equation}

\begin{example}\label{ex:A2}
Let $\Delta$ be the Dynkin diagram of type $A_2$, on nodes $1$ and $2$,
where $I_-=\{1\}$ and $I_+ = \{2\}$.  The incidence matrix of the Dynkin
diagram is 
\[
A = \begin{pmatrix}
0 & 1 \\
1 & 0
\end{pmatrix}.
\]

If we set $Y_1(0)=u_1$, $Y_2(0)=u_2$,
then the recurrence for $Y_i(k)$ in \eqref{newY} yields:

\begin{align*}
Y_1(0)&=u_1 \,,& Y_2(0)&=u_2\\
Y_1(1)&=u_1^{-1} \,,& Y_2(1)&=u_2(1+u_1)\\
Y_1(2)&=\frac{1+u_2+u_1 u_2}{u_1} \,,&  Y_2(2)&=\frac{1}{u_2(1+u_1)}\\
Y_1(3)&=\frac{u_1}{1+u_2+u_1 u_2} \,,&  Y_2(3)&=\frac{1+u_2}{u_1 u_2}\\
Y_1(4)&=u_2^{-1} \,,&         Y_2(4)&=\frac{u_1 u_2}{1+u_2}\\
Y_1(5)&={u_2} \,,&                Y_2(5)&=u_1.
\end{align*}
By symmetry, it's clear that $Y_1(10)=u_1$ and $Y_2(10)=u_2$
and this system has period $10=2(3+2)$, as predicted by Theorem 
\ref{th:Z1}.
\end{example}

The following lemma follows easily from induction and the definition of 
$\tau_{\epsilon}$.
\begin{lemma}\label{lem:Z}
Set $Y_i(0)=u_i$ for $i\in I$.  Then for all 
$k \in \Z_{\geq 0}$ and $i \in I$, we have
$Y_i(k) = (\tau_- \tau_{+} \dots \tau_{\pm})(u_i)$, where
the number of factors $\tau_+$ and $\tau_-$ equals $k$.
\end{lemma}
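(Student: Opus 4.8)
The plan is to prove Lemma \ref{lem:Z} by induction on $k$, using the explicit one-step recurrence \eqref{newY} together with the definition of $\tau_\epsilon$. The base case $k=0$ is immediate: the empty composition is the identity, and $Y_i(0)=u_i$ by hypothesis. For the inductive step, suppose that $Y_i(k) = (\tau_{\pm}\cdots\tau_-\tau_+)(u_i)$ for all $i\in I$, where the composite has $k$ factors, alternating and ending (reading right-to-left, i.e.\ applied first) with $\tau_+$, so that the leftmost (last-applied) factor is $\tau_+$ if $k$ is odd and $\tau_-$ if $k$ is even. The claim is that applying one more factor $\tau_{\epsilon}$ on the left, with the appropriate sign $\epsilon$, reproduces exactly the passage from $Y_i(k)$ to $Y_i(k+1)$ dictated by \eqref{newY}.

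First I would pin down the correct sign. The factor to be applied in going from step $k$ to step $k+1$ should be $\tau_+$ when $k$ is even and $\tau_-$ when $k$ is odd; equivalently, it is $\tau_{(-1)^k}$ under the identification $\epsilon\leftrightarrow(-1)^{?}$ — this bookkeeping must be matched against the two cases of \eqref{newY}, whose cases are split according to whether $\epsilon(i)=(-1)^{k+1}$ or $\epsilon(i)=(-1)^k$. Concretely: write $Y_i(k+1) = \tau_{\delta}(Y_i(k))$ where $\delta$ is the sign of the $(k+1)$-st factor. If $\epsilon(i)=\delta$, then by definition $\tau_{\delta}(u_i) = u_i\prod_{j\in I}(u_j+1)^{a_{ij}}$, and since $\tau_\delta$ is a ring automorphism this gives $\tau_\delta(Y_i(k)) = Y_i(k)\prod_{j}(Y_j(k)+1)^{a_{ij}}$ — which is precisely the first case of \eqref{newY}. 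If $\epsilon(i)\neq\delta$, then $\tau_\delta(u_i)=u_i^{-1}$, so $\tau_\delta(Y_i(k)) = Y_i(k)^{-1}$ — the second case of \eqref{newY}. The only thing to check is that the condition ``$\epsilon(i)=\delta$'' coincides with the condition ``$\epsilon(i)=(-1)^{k+1}$'' appearing in \eqref{newY}, which is a matter of fixing the convention for how the sign of the $(k+1)$-st factor depends on the parity of $k$; once the convention is fixed consistently with the $A_2$ computation in Example \ref{ex:A2} (where, e.g., $Y_1(1)=u_1^{-1}$ because $1\in I_-$ and the first applied factor is $\tau_+$), everything matches.

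The main (and only) obstacle is this sign/parity bookkeeping: one must be careful that the alternation of $\tau_+$ and $\tau_-$ in the composite $(\tau_-\tau_+\cdots\tau_{\pm})$ lines up with the parity conditions $\epsilon(i)=(-1)^k$ in \eqref{newY}, and that the automorphism property of $\tau_\epsilon$ is invoked correctly (it is legitimate to pull $\tau_\delta$ inside the product $\prod_j(Y_j(k)+1)^{a_{ij}}$ precisely because $\tau_\delta$ is a field automorphism of $\Q(u)$ fixing $\Q$). Beyond that, the argument is a routine one-line induction. I would present it as: verify $k=0$; assume the formula at $k$; apply the appropriate $\tau_\delta$; use that $\tau_\delta$ is a ring homomorphism to distribute it over \eqref{newY}; invoke the two defining cases of $\tau_\delta$; conclude the formula at $k+1$.
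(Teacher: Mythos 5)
Your overall strategy (induction on $k$, base case trivial, one-step matching against \eqref{newY}) is the right one and is exactly what the paper has in mind, but the way you execute the inductive step contains a genuine error. You write the step as $Y_i(k+1)=\tau_\delta\bigl(Y_i(k)\bigr)$, i.e.\ you prepend the new factor on the \emph{left} (outside) of the composite, and you justify the identity $\tau_\delta(Y_i(k))=Y_i(k)\prod_j(Y_j(k)+1)^{a_{ij}}$ by "$\tau_\delta$ is a ring automorphism." That justification does not work: since $Y_i(k)$ is a rational function $R(u_1,\dots,u_n)$, the automorphism property of $\tau_\delta$ only gives $\tau_\delta(Y_i(k))=R(\tau_\delta(u_1),\dots,\tau_\delta(u_n))$; it does not let you re-read the defining formula of $\tau_\delta$ with each $u_j$ replaced by $Y_j(k)$. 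The identity is in fact false. In type $A_2$ with $\epsilon(1)=-1$, $\epsilon(2)=+1$, equation \eqref{newY} gives $Y_1(1)=u_1(u_2+1)$, $Y_2(1)=u_2^{-1}$ and $Y_1(2)=\bigl(u_1(u_2+1)\bigr)^{-1}$, whereas $\tau_+(Y_1(1))=u_1^{-1}\bigl(u_2(u_1+1)+1\bigr)$ and $\tau_-(Y_1(1))=u_1(u_2+1)(u_2^{-1}+1)$; neither equals $Y_1(2)$. So the recursion is \emph{not} "apply one more $\tau$ to the previous $Y$-values."

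The correct bookkeeping is the opposite composition order: the new letter is composed on the \emph{right} (applied first to the generators), and the automorphism whose homomorphism property you invoke is the \emph{accumulated} composite, not the new letter. Concretely, let $F_k$ be the automorphism with $Y_j(k)=F_k(u_j)$ for all $j$ (inductive hypothesis; $F_0=\mathrm{id}$), and set $F_{k+1}=F_k\circ\tau_{\delta}$ with $\delta=(-1)^{k+1}$. Then
\begin{equation*}
F_{k+1}(u_i)=F_k\bigl(\tau_{\delta}(u_i)\bigr)=
\begin{cases}
F_k\Bigl(u_i\prod_j(u_j+1)^{a_{ij}}\Bigr)=Y_i(k)\prod_j\bigl(Y_j(k)+1\bigr)^{a_{ij}} & \text{if }\epsilon(i)=\delta,\\[.05in]
F_k(u_i^{-1})=Y_i(k)^{-1} & \text{otherwise,}
\end{cases}
\end{equation*}
which matches the two cases of \eqref{newY} exactly, so $Y_i(k+1)=F_{k+1}(u_i)$. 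Here it is $F_k$ being a field automorphism fixing $\Q$ that lets you pull it through the product. This is not merely a sign convention: with your left-composition the induction genuinely breaks, and the resulting word would also be read in the wrong order. Once you set up the induction with $F_{k+1}=F_k\tau_\delta$, the rest of your discussion (base case, matching $\epsilon(i)=\delta$ with the parity condition in \eqref{newY}, checking against the $A_2$ example) goes through.
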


Let us define an automorphism  of $\Q(u)$ by 
\begin{equation}
\phi = \tau_- \tau_+.
\end{equation}

Then we have the following.
\begin{lemma}\label{lem:Z1}
The Y-system from \eqref{eq:Z1} is periodic with period dividing
$2(h+2)$ if and only if $\phi$ has finite order dividing $h+2$.
\end{lemma}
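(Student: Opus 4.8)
The plan is to relate periodicity of the $Y$-system to the order of the automorphism $\phi = \tau_-\tau_+$ via the formula of Lemma~\ref{lem:Z}, being careful about the interleaving of the factors $\tau_+$ and $\tau_-$. First I would observe that by Lemma~\ref{lem:Z}, $Y_i(k)$ is obtained by applying to $u_i$ an alternating product of $k$ factors, each equal to $\tau_+$ or $\tau_-$; since the Dynkin diagram is bipartite with parts $I_+,I_-$ and $\tau_\epsilon$ only acts nontrivially in a way coupling the two parts, the product $\tau_-\tau_+\tau_-\tau_+\cdots$ of length $k$ is what computes $Y_i(k)$. The key point is that $\tau_+$ and $\tau_-$ are each involutions on the relevant variables up to the bipartite structure — more precisely, applying $\tau_+$ twice, or $\tau_-$ twice, acts as the identity on the subset of variables it is allowed to change — so the alternating word of length $2m$ equals $\phi^m = (\tau_-\tau_+)^m$, while the alternating word of length $2m+1$ equals $\tau_+ \phi^m$ (or $\tau_-\phi^m$, depending on which factor comes first in the convention of Lemma~\ref{lem:Z}).

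Next I would carry out the ``if'' direction: suppose $\phi^{h+2} = \mathrm{id}$. Then $Y_i\bigl(2(h+2)\bigr)$ is computed by the alternating word of length $2(h+2)$, which equals $\phi^{h+2}$ applied to $u_i$, hence equals $u_i = Y_i(0)$. Since the recursion \eqref{newY} is deterministic in both directions (each step is invertible), $Y_i(k+2(h+2)) = Y_i(k)$ for all $k$, which is precisely the asserted period dividing $2(h+2)$ for the half-system, and hence for the full $Y$-system once one recombines the even and odd parts as explained before Example~\ref{ex:A2}. For the ``only if'' direction, I would run the argument in reverse: if the $Y$-system has period dividing $2(h+2)$, then evaluating at generic initial data $Y_i(0) = u_i$ gives $\phi^{h+2}(u_i) = Y_i(2(h+2)) = u_i$ for every generator $u_i$ of $\Q(u)$, so $\phi^{h+2}$ fixes every generator and is therefore the identity automorphism; thus $\phi$ has finite order dividing $h+2$.

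One subtlety I would address carefully: the passage from ``period of the $Y$-system divides $2(h+2)$'' to ``$\phi$ has order dividing $h+2$'' uses that $2(h+2)$ is \emph{even}, so that the length-$2(h+2)$ alternating word is a pure power of $\phi$ rather than $\tau_\pm$ times a power of $\phi$; and conversely, since $2(h+2)$ is even, $\phi^{h+2}=\mathrm{id}$ directly gives periodicity with the even period $2(h+2)$ without any leftover odd factor. I would also note that one must restrict to the reduced ``half'' system \eqref{newY} throughout — the original $Y$-system \eqref{eq:Z1} splits into even and odd subsystems of the same combinatorial type, and periodicity of one is equivalent to periodicity of the other, so it suffices to treat \eqref{newY}, which is exactly what Lemma~\ref{lem:Z} describes.

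The main obstacle I expect is verifying cleanly that $\tau_\epsilon^2$ acts as the identity on the sub-collection of variables $\{u_i : \epsilon(i) = \epsilon\}$ (equivalently, that the alternating word truly collapses to a power of $\phi$), since $\tau_\epsilon$ does not literally square to the identity as an automorphism of all of $\Q(u)$ — it only does so ``sectorwise'' because of the bipartite decoupling. Handling this bookkeeping, together with the distinction between the full $Y$-system and its even/odd halves, is where the real content of the lemma lies; the rest is a formal translation between a deterministic recurrence and iteration of an automorphism.
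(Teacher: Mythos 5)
Your overall route is the one the paper intends (it states the lemma as an immediate consequence of Lemma~\ref{lem:Z}): for even $k=2m$ the alternating word in Lemma~\ref{lem:Z} is $(\tau_-\tau_+)^m=\phi^m$, so $Y_i(2(h+2))=\phi^{h+2}(u_i)$; periodicity at $t=0$ propagates to all $t$ because each step of \eqref{newY} is an invertible substitution, and conversely periodicity with generic initial data $Y_i(0)=u_i$ forces $\phi^{h+2}$ to fix every generator of $\Q(u)$ and hence to be the identity. That argument is correct and complete.

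However, the step you single out as the ``key point'' --- that $\tau_+$ and $\tau_-$ are sectorwise involutions, needed to collapse the alternating word into a power of $\phi$ --- is both unnecessary and, as stated, false. It is unnecessary because $\tau_-\tau_+\tau_-\tau_+\cdots$ (an even number of factors) groups into $(\tau_-\tau_+)^m$ by associativity of composition alone; nothing needs to cancel. And it is false because on a variable $u_i$ with $\epsilon(i)=\epsilon$ one computes
\begin{equation*}
\tau_\epsilon^2(u_i)=u_i\prod_{j\in I}(u_j+1)^{a_{ij}}\prod_{j\in I}(u_j^{-1}+1)^{a_{ij}},
\end{equation*}
using that every neighbour $j$ of $i$ has $\epsilon(j)=-\epsilon$ and so $\tau_\epsilon(u_j)=u_j^{-1}$; this is not $u_i$. (The square of $\tau_\epsilon$ is the identity only on the variables it merely inverts, i.e.\ those with $\epsilon(i)\neq\epsilon$.) So the ``main obstacle'' you anticipate does not exist, and the false auxiliary claim should simply be deleted; the rest of your proof stands without it. The only place where the interaction between $\tau_\epsilon$ and the bipartition genuinely matters is in the proof of Lemma~\ref{lem:Z} itself, which you are correctly taking as given.
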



To connect Zamolodchikov's conjecture to cluster algebras, let
us revisit the notion of Y-seed mutation from 
Definition \ref{def:Yseed-mutation}.  We will assume that $B$
is skew-symmetric, and hence can be encoded by a finite
quiver $Q$ without loops or $2$-cycles.
Let $(\PP,\oplus, \cdot)$ be $\Q$ with the usual operations of 
addition and multiplication.  Then $\mu_k(\yy, Q) = 
(\yy', Q')$, where 
\begin{equation*}
y_j' = 
     \begin{cases}
      y_k^{-1}   &\text{ if  $j=k$}\\
      y_j (1+y_k)^{m} &\text{ if there are $m\geq 0$ arrows $j \to k$}\\
      y_j (1+y_k^{-1})^{-m} &\text{ if there are $m \geq 0$ arrows $k \to j$.}
\end{cases}
\end{equation*}

Comparing the formula for Y-seed mutation with the definition
of the
automorphisms $\tau_{\epsilon}$ suggests a connection, 
which we make precise in Exercise \ref{ex:Z1}.
First we will define a \emph{restricted $Y$-pattern}.

\begin{definition}
[\emph{Restricted $Y$-pattern}]
Let $(\PP,\oplus, \cdot)$ be $\Q$ with the usual operations of 
addition and multiplication.  Let $Q$ denote a finite quiver
without loops or $2$-cycles with vertex set $\{1,\dots,n\}$, 
let $\yy=(y_1,\dots,y_n)$ and let
$(\yy,Q)$ be the corresponding Y-seed. 
Let $\vv$ be a sequence of vertices $v_1,\dots,v_N$ of $Q$,
with the property that the composed mutation
$$\mu_{\vv} = \mu_{v_N} \dots \mu_{v_2} \mu_{v_1}$$
transforms $Q$ into itself.  Then clearly the same holds
for the same sequence in reverse $\mu_{\vv}^{-1}$.  We 
define the \emph{restricted $Y$-pattern} associated with $Q$
and $\mu_{\vv}$ to be the sequence of Y-seeds obtained from 
the initial Y-seed $(\yy,Q)$ be applying all integer powers
of $\mu_{\vv}$.
\end{definition}


\begin{exercise}\label{ex:Z1}
Let $\Delta$ be a simply-laced Dynkin diagram with $n$ vertices, and vertex set
$I = I_+ \cup I_-$ as above.  
Let $Q$ denote  the unique ``bipartite" orientation 
of $\Delta$ such that each vertex in 
$I_+$ is a source and each vertex in $I_-$ is a sink.
\begin{enumerate}
\item Then the composed mutation $\mu_+ = \prod_{i\in I_-} \mu_i$ is well-defined, 
in other words, any sequence of mutations on the vertices in $I_-$
yields the same result.  Similarly $\mu_- = \prod_{i\in I_+} \mu_i$ 
is well-defined.
\item The composed mutation $\mu_-  \mu_+$ transforms $Q$ into itself.
Similarly for 
$\mu_+  \mu_- $.  
\item 
The automorphism $\tau_- \tau_+$ has finite order $m$ if and only if
the restricted $Y$-pattern associated with $Q$ and $\mu_- \mu_+$
is periodic with period $m$.
\end{enumerate}
\end{exercise}

Combining Exercise \ref{ex:Z1} with Lemma \ref{lem:Z}, we see that 
Zamolodchikov's periodicity conjecture is equivalent to 
verifying the periodicity of the restricted $Y$-pattern
from Exercise \ref{ex:Z1} (3).
See Figure \ref{fig:Yseeds} for an example of Y-seed mutation
on the Dynkin diagram of type $A_2$.  Compare the labeled Y-seeds here
with Example \ref{ex:A2}.
\begin{figure} \begin{center}
\input{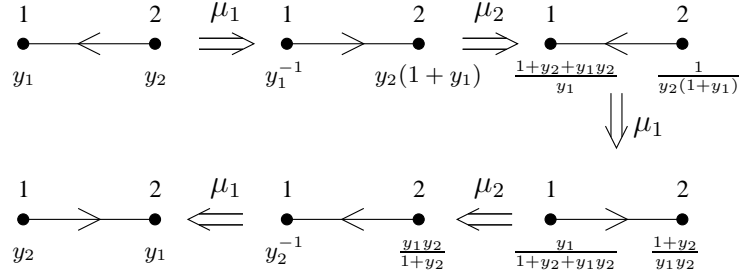}
\end{center}
\caption{Y-seeds and Y-seed mutations in type $A_2$.}
\label{fig:Yseeds}
\end{figure}


\subsection{The periodicity  conjecture for pairs of Dynkin diagrams}\label{sec:pairs}
\label{sec:Z2}

In this section we let $\Delta$ and $\Delta'$ be Dynkin diagrams,
with vertex sets $I$ and $I'$, and incidence matrices $A$ and $A'$.

\begin{theorem} 
[\emph{The periodicity conjecture for pairs of Dynkin diagrams}]
\label{th:Z2}
Consider the recurrence relation
\begin{equation}\label{eq:Z2}
Y_{i,i'}(t+1) Y_{i,i'}(t-1) =\frac{ \prod_{j\in I} (Y_{j,i'}(t)+1)^{a_{ij}}} 
                                  {\prod_{j'\in I'} (Y_{i,j'}(t)^{-1}+1)^{a'_{i',j'}}},\quad t\in \Z.
\end{equation}
All solutions to this system are periodic in $t$ with period 
dividing $2(h+h')$.
\end{theorem}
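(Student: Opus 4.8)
The plan is to reformulate the recurrence \eqref{eq:Z2} as the dynamics of a restricted $Y$-pattern in a suitable cluster algebra, exactly parallel to what was done in Section \ref{sec:Z1}, and then to invoke the classification of finite-type cluster algebras. First I would observe that, just as for a single Dynkin diagram, the parity $\epsilon(i)\epsilon'(i')(-1)^t$ of the index on the left-hand side of \eqref{eq:Z2} is fixed, so the $Y$-system splits into an even and an odd part, and it suffices to treat one. After imposing the half-step relation $Y_{i,i'}(t+1)=Y_{i,i'}(t)^{-1}$ on the ``wrong parity'' entries, the recurrence becomes a single-step evolution of the form $Y(t+1)=(\tau_-\tau_+)(Y(t))$ for appropriate automorphisms $\tau_\pm$ of the field of rational functions in the variables $u_{i,i'}$, $(i,i')\in I\times I'$; by the analogue of Lemma \ref{lem:Z1}, periodicity with period dividing $2(h+h')$ is equivalent to the automorphism $\phi=\tau_-\tau_+$ having order dividing $h+h'$.

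Next I would encode $\tau_\pm$ as composed $Y$-seed mutations. The key point is to build a quiver $Q=Q(\Delta,\Delta')$ on vertex set $I\times I'$ whose signed adjacency matrix is, up to sign conventions on the two bipartitions $I=I_+\sqcup I_-$ and $I'=I'_+\sqcup I'_-$, the tensor product of the Cartan-type data of $\Delta$ and $\Delta'$: this is precisely the ``product'' quiver underlying the cluster algebra of type $\Delta\otimes\Delta'$ studied by Fock--Goncharov and by Keller. Partitioning $I\times I'$ into the two classes according to $\epsilon(i)\epsilon'(i')=\pm 1$ gives a bipartite-type decomposition, and, as in Exercise \ref{ex:Z1}, the composed mutations $\mu_+$ (at all vertices of one class) and $\mu_-$ (at all vertices of the other) are well-defined and each takes $Q$ to itself. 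Comparing the $Y$-seed mutation rule \eqref{eq:y-mutation} (with $\PP=\Q$, $a\oplus b=a+b$) against the explicit form of $\tau_\pm$ dictated by the numerator/denominator structure of \eqref{eq:Z2}, one checks that the restricted $Y$-pattern associated with $Q$ and $\mu_-\mu_+$ reproduces exactly the evolution \eqref{newY}-style recurrence obtained in the previous paragraph; hence $\phi$ has finite order $m$ if and only if that restricted $Y$-pattern is periodic with period $m$.

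Finally, periodicity follows from finiteness: the $Y$-pattern attached to a quiver is periodic precisely when the underlying cluster algebra (equivalently, its mutation class, equivalently its $c$- and $g$-vector combinatorics) is of finite type, and $Q(\Delta,\Delta')$ is mutation-equivalent to an orientation of the finite-type diagram $\Delta\otimes\Delta'$, so Theorem \ref{th:finite} applies. The exact value of the period --- that it divides $2(h+h')$ rather than merely being finite --- is the delicate part and is the main obstacle: it requires identifying the composed mutation $\mu_-\mu_+$ with (a power of) the Coxeter-type element acting on the exchange graph / generalized associahedron of $\Delta\otimes\Delta'$, and tracking how the Coxeter numbers $h$ and $h'$ combine. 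For $\Delta'=A_1$ this recovers Theorem \ref{th:Z1} with period $2(h+2)$, since $h(A_1)=2$; in general the sharp bound is obtained either from Keller's categorical model, where $\mu_-\mu_+$ lifts to the shift functor (or a root thereof) on the relevant triangulated category and its order is read off from the Serre functor, or from the explicit $\Q$-system/$T$-system computations of Inoue--Iyama--Keller--Kuniba--Nakanishi. I expect the bipartite belt argument of Fomin--Zelevinsky, suitably tensored, to yield the divisibility by $h+h'$ once the quiver $Q(\Delta,\Delta')$ and the mutation sequence are set up correctly.
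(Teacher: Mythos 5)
Your first two paragraphs reproduce the paper's own reduction faithfully: splitting the $Y$-system by parity, passing to the automorphism $\phi=\tau_-\tau_+$ (Lemma \ref{lem:Z2}), and encoding $\phi$ as the composed mutation $\mu_{\square}$ acting on a restricted $Y$-pattern for the square product quiver $Q\square Q'$ (Exercise \ref{ex:Z2}). Up to that point you are on the paper's track, modulo the detail that the relevant quiver is the square product (tensor product with certain arrows reversed to respect the two bipartitions), not the bare tensor product.

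The final step, however, contains a genuine and fatal gap. You claim that ``periodicity follows from finiteness'' because $Q\square Q'$ is mutation-equivalent to an orientation of a finite-type diagram, so that Theorem \ref{th:finite} applies. This is false: $\Delta\otimes\Delta'$ is not a Dynkin diagram, and the cluster algebras attached to $Q\square Q'$ are \emph{not} of finite type in general (already $A_3\square A_3$ is of infinite type). The paper states this explicitly in Section \ref{sec:Z3}: the Fomin--Zelevinsky finite-type argument that works for Theorem \ref{th:Z1} ``does not apply to Theorem \ref{th:Z2}, because the cluster algebras associated with products $Q\square Q'$ are not in general of finite type.'' This is precisely why the theorem resisted proof for so long. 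Consequently, the categorical input you relegate to ``the delicate part'' (pinning down the exact period) is in fact needed to establish periodicity at all: Keller's proof constructs a $2$-Calabi--Yau triangulated category with a cluster-tilting object whose endoquiver is related to $Q\square Q'$, realizes $\phi$ as a Zamolodchikov transformation $Za$ on that category, and proves $Za^{h+h'}=\mathrm{id}$ there, from which periodicity of the restricted $Y$-pattern follows by decategorification. Without that (or the IIKKN tropical/categorical argument), your proposal establishes only the reformulation of the conjecture, not the conjecture itself.
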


Note that if $\Delta'$ is of type $A_1$, then 
Theorem \ref{th:Z2} reduces to Theorem \ref{th:Z1}.

Just as we saw for Theorem \ref{th:Z1}, it is possible to reformulate
Theorem \ref{th:Z2} in terms
of certain automorphisms.  Write
$I = I_+ \cup I_-$ 
and $I' = I'_+ \cup I'_-$
as before.
For a vertex 
$(i,i')$ of the product $I \times I'$, define
$\epsilon(i,i')$ to be $1$ or $-1$ based on whether 
$(i,i')$ lies in $(I_+ \times I'_+) \cup (I_- \times I'_-)$
or not.   Let $\Q(u)$ be the field of rational functions
in the variables $u_{ii'}$ for $i\in I$ and $i'\in I'$,
and define an automorphism of $\Q(u)$ by 
\begin{equation}
\label{eq:phi2}
\phi = \tau_- \tau_+, \text{ where }
\end{equation}
\begin{equation*}
\tau_{\epsilon}(u_{ii'}) = 
     \begin{cases}
      u_{ii'} 
         \prod_{j\in I} (u_{ji'}+1)^{a_{ij}} 
         \prod_{j'\in I'} (u_{ij'}^{-1}+1)^{-a'_{i'j'}} 
                  &\text{ if  $\epsilon(i,i') = \epsilon$}\\
      u_{ii'}^{-1}                               &\text{ otherwise.}
\end{cases}
\end{equation*}

As before, we may assume that 
$Y_{i,i'}(k+1) = Y_{i,i'}(k)^{-1}$  whenever 
$\epsilon(i,i')=(-1)^{k}.$
One may then reformulate the periodicity conjecture
as follows.

\begin{lemma}\label{lem:Z2}
The periodicity conjecture for pairs of Dynkin diagrams
holds if and only if 
$\phi$ has finite order dividing $h+h'$.
\end{lemma}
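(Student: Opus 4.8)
The plan is to mirror the argument already sketched for Lemma~\ref{lem:Z1} in the single-diagram case, adapting it to the product setting. First I would observe that, exactly as in the discussion preceding Example~\ref{ex:A2}, the variables $Y_{i,i'}(t)$ appearing on the left side of \eqref{eq:Z2} carry a well-defined parity $\epsilon(i,i')(-1)^t$, so the recurrence \eqref{eq:Z2} splits into an even subsystem and an odd subsystem; it suffices to prove periodicity of one of them, and without loss of generality we may impose $Y_{i,i'}(k+1) = Y_{i,i'}(k)^{-1}$ whenever $\epsilon(i,i') = (-1)^k$. Substituting this normalization into \eqref{eq:Z2} converts the second-order recurrence into a first-order one, in which $Y_{i,i'}(k+1)$ equals $Y_{i,i'}(k)^{-1}$ for half the indices and equals a specific rational expression in the $Y_{j,j'}(k)$ for the other half.

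Next I would match this first-order recurrence against the automorphisms $\tau_\pm$ defined in \eqref{eq:phi2}. The key point is that $\tau_\epsilon$ acts as inversion on exactly the variables with $\epsilon(i,i') \neq \epsilon$ and as the prescribed rational substitution on the variables with $\epsilon(i,i') = \epsilon$. Hence, arguing by induction on $k$ just as in Lemma~\ref{lem:Z} (whose proof the excerpt says ``follows easily from induction and the definition of $\tau_\epsilon$''), one shows that with $Y_{i,i'}(0) = u_{ii'}$ one has $Y_{i,i'}(k) = (\tau_- \tau_+ \cdots \tau_\pm)(u_{ii'})$ with $k$ factors in total, alternating between $\tau_+$ and $\tau_-$. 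One must be slightly careful about which of $\tau_+$ or $\tau_-$ comes first, corresponding to the choice of even versus odd subsystem, but this only affects the statement up to the obvious symmetry. In particular $Y_{i,i'}(2m) = \phi^m(u_{ii'})$ for the subsystem corresponding to $\phi = \tau_-\tau_+$.

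From this identification the equivalence is immediate: the normalized (hence the original) Y-system returns to its initial values after $2m$ steps precisely when $\phi^m$ is the identity automorphism of $\Q(u)$, so periodicity with period dividing $2(h+h')$ is equivalent to $\phi^{h+h'} = \mathrm{id}$, i.e.\ to $\phi$ having finite order dividing $h+h'$. For the converse direction one also uses that all solutions of \eqref{eq:Z2} — not just the generic one with indeterminate initial data — are captured, which holds because the generic solution over $\Q(u)$ specializes to any particular numerical solution, so periodicity of the generic solution forces periodicity of all of them.

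\textbf{Main obstacle.} The genuinely delicate point is the bookkeeping in the induction of the second paragraph: one must verify that feeding the normalization $Y_{i,i'}(k+1) = Y_{i,i'}(k)^{-1}$ (valid on the parity-$(-1)^k$ indices) back into \eqref{eq:Z2} produces exactly the rational expression appearing in the definition of $\tau_\epsilon$ in \eqref{eq:phi2}, including the placement of the inverse in the factor $\prod_{j'\in I'}(u_{ij'}^{-1}+1)^{-a'_{i'j'}}$ and the sign of that exponent. This is a routine but error-prone algebraic check; once it is in place, the rest of the argument is formal, so I would concentrate the written proof there and treat the parity decomposition and the specialization argument briefly.
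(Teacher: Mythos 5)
Your proposal is correct and follows exactly the route the paper intends: it states Lemma~\ref{lem:Z2} immediately after the normalization ``$Y_{i,i'}(k+1)=Y_{i,i'}(k)^{-1}$ whenever $\epsilon(i,i')=(-1)^k$'' with the phrase ``as before,'' i.e.\ the argument is the verbatim analogue of Lemmas~\ref{lem:Z} and~\ref{lem:Z1}, which is what you carry out (parity decomposition, first-order reformulation, inductive identification of $Y_{i,i'}(2m)$ with $\phi^m(u_{ii'})$). Your added checks — that adjacency in $\Delta$ or $\Delta'$ flips $\epsilon(i,i')$ so the parity splitting really goes through, and the specialization remark for non-generic solutions — are exactly the right points to verify and are consistent with the paper.
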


Now let us explain how to relate the periodicity conjecture
for pairs of Dynkin diagrams
to cluster algebras (more specifically, restricted $Y$-patterns).
We first need to define some operations on quivers.

Let $Q$ and $Q'$ be two finite quivers on vertex sets $I$ and $I'$
which are bipartite, i.e.
each vertex is a source or a sink.  The \emph{tensor product}
$Q \otimes Q'$ is the quiver on vertex set
$I \times I'$,  where the number of arrows from
a vertex $(i,i')$ to a vertex $(j,j')$ 
\begin{enumerate}
\item is zero if $i \neq j$ and $i' \neq j'$;
\item equals the number of arrows from $j$ to $j'$ if $i=i'$;
\item equals the number of arrows from $i$ to $i'$ if $j=j'$.
\end{enumerate}
The \emph{square product} $Q\square Q'$ is
the quiver obtained from $Q \otimes Q'$ by reversing all arrows
in the full subquivers of the form 
$\{i\} \times Q'$ and $Q \times \{i'\}$, where $i$ is a sink of $Q$
and $i'$ a source of $Q'$.  See Figure \ref{fig:a4-prod-d5} 
for an example of the 
square product of the following quivers.
\begin{align*}
\vec{A}_4 &: \xymatrix{ 1 & 2 \ar[l] \ar[r] & 3 & 4 \ar[l] } \ko\\
\vec{D}_5 &: \raisebox{0.75cm}{\xymatrix@R=0.2cm{   &  &  & 4 \ar[dl] \\ 1 & 2 \ar[l] \ar[r] & 3 & \\
& & & 5. \ar[ul] }}
\end{align*}

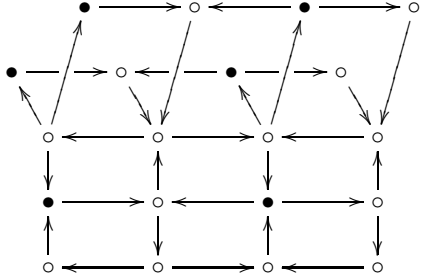
\begin{figure}
\[
\xymatrix@C=0.1cm@R=0.5cm{ & & \bt \ar[rrr] &  &  & \circ \ar[ldd]
& & &
\bt \ar[lll] \ar[rrr] & & & \circ \ar[ldd] \\
     \bt \ar[rrr]|!{[dr];[urr]}\hole & &  & \circ \ar[rd] & & &
\bt \ar[lll]|!{[lu];[dll]}\hole \ar[rrr]|!{[rd];[rru]}\hole & & & \circ \ar[rd] & &  \\
           & \circ \ar[lu] \ar[ruu] \ar[d] & & & \circ \ar[lll] \ar[rrr] & & &
\circ \ar[ruu] \ar[lu] \ar[d] & & & \circ \ar[lll] & \\
           & \bt \ar[rrr] & & & \circ \ar[d] \ar[u] & & &
\bt \ar[lll] \ar[rrr] & & & \circ \ar[u] \ar[d] & \\
           & \circ \ar[u] & & & \circ \ar[lll] \ar[rrr] &  & &
\circ \ar[u] & & & \circ \ar[lll] & }
\]
\caption{The quiver   $\vec{A}_4 \square \vec{D}_5$}
\label{fig:a4-prod-d5}
\end{figure}

\begin{exercise}\label{ex:Z2}
Let $\Delta$ and $\Delta'$ be simply-laced Dynkin diagrams with 
vertex sets $I$ and $I'$.
We write 
$I = I_+ \cup I_-$ 
and
$I' = I'_+ \cup I'_-$ as usual, and choose the corresponding
bipartite orientations $Q$ and $Q'$ of $\Delta$ and $\Delta'$
so that each vertex in $I_+$ or $I'_+$ is a source
and each vertex in $I_-$ or $I'_-$ is a sink.  
\begin{enumerate}
\item Given two elements $\sigma, \sigma'$ of $\{+,-\}$,  the
following composed mutation 
$$\mu_{\sigma, \sigma'} = \prod_{i \in I_{\sigma},\ i'\in I'_{\sigma'}}
\mu_{(i,i')}$$ of $Q \square Q'$ is well-defined, 
that is, the order in the product does not matter.
\item The composition
$\mu_{\square} = \mu_{-,-}\ \mu_{+,+}\ \mu_{-,+}\ \mu_{+,-}$
transforms $Q$ into itself.
\item 
The automorphism $\phi$ has finite order $m$ if and only if
the restricted Y-seed associated with $Q\square Q'$ and $\mu_{\square}$
is periodic with period $m$.  
\end{enumerate}
\end{exercise}

Combining Exercise \ref{ex:Z2} with Lemma \ref{lem:Z2}, we have the following:
\begin{lemma}
The periodicity conjecture holds for $\Delta$ and $\Delta'$ if and only if
the restricted Y-seed associated with 
$Q\square Q'$ and $\mu_{\square}$
is periodic with period dividing $h+h'$.
\end{lemma}

\subsection{On the proofs of the periodicity conjecture}
\label{sec:Z3}

In this section we discuss how 
techniques from the theory of cluster algebras were used to 
prove Theorems \ref{th:Z1} and \ref{th:Z2}.

First note that the proofs of Theorem \ref{th:Z1} and Theorem \ref{th:Z2} can 
be reduced to the case that the Dynkin diagrams are simply-laced,
using standard ``folding" arguments.  
Second,
as illustrated in Exercises \ref{ex:Z1} and \ref{ex:Z2},
the periodicity conjecture for simply-laced Dynkin diagrams
may be reformulated in terms of 
cluster algebras.  Specifically, the conjecture is equivalent
to verifying the periodicity of certain restricted $Y$-patterns.

Fomin-Zelevinsky's proof of Theorem \ref{th:Z1} 
used ideas which are now fundamental to the 
structure theory of finite type cluster algebras,
including a bijection between cluster variables and 
``almost-positive" roots of the corresponding root system.
They showed that this bijection, together with a  ``tropical" version of 
Theorem \ref{th:Z1}, implies Theorem \ref{th:Z1}.
Moreover, they gave an explicit solution to each Y-system,
in terms of certain \emph{Fibonacci polynomials}.
The Fibonacci polynomials
are (up to a twist) special cases of \emph{F-polynomials}, which 
in turn are important objects in cluster algebras, and control 
the dynamics of both cluster and coefficient variables
\cite{FZ4}.  
Note however that the Fomin-Zelevinsky proof does not apply 
to Theorem \ref{th:Z2}, because the cluster algebras 
associated with products $Q \square Q'$ are not in general of finite type.

Keller's proof of Theorem \ref{th:Z2} used the 
\emph{additive categorification} (via triangulated categories)
of cluster algebras.  To give some background on categorification,
in 2003, Marsh-Reineke-Zelevinsky \cite{MRZ} discovered that 
when $\Delta$ is a simply-laced Dynkin diagram, there is a 
close resemblance between the combinatorics of the cluster
variables and those of the \emph{tilting modules}
in the category of representations of the quiver; this initiated
the theory of \emph{additive categorification} of cluster algebras.
In this theory, one seeks to construct module or triangulated
categories associated to quivers so as to obtain
a correspondence between rigid objects of the categories
and the cluster monomials in the cluster algebras.  The required
correspondence sends direct sum decompositions
of rigid objects to factorizations of the associated cluster 
monomials.  
One may then hope to use the rich structure of these 
categories  to prove results on cluster algebras which 
seem beyond the scope of purely combinatorial methods.

Recall from Section \ref{sec:pairs} that 
the periodicity
conjecture for pairs of Dynkin diagrams is equivalent to 
the periodicity of the automorphism $\phi$ from \eqref{eq:phi2},
which in turn is equivalent to the periodicity 
of a restricted $Y$-pattern associated to $Q \square Q'$.
Keller's central construction from \cite{Keller2} was a 
triangulated 2-Calabi-Yau category $\CC$ with a cluster-tilting
object $T$, whose endoquiver (quiver of its endomorphism algebra)
is closely related to $Q \square Q'$; the category $\CC$ is a
generalized cluster category in the sense of Amiot \cite{A}.
Since $\CC$ is 2-Calabi-Yau, results of Iyama-Yoshino \cite{IY} imply
that there is a well-defined
mutation operation for the cluster-tilting objects.
Keller defined  the \emph{Zamolodchikov transformation}
$Za: \CC \to \CC$, which one may think of as 
a categorification of the automorphism 
$\phi$, and proved that $Za$
is periodic of period $h+h'$.  By ``decategorification," it follows
that $\phi$ is periodic of period $h+h'$, and hence the periodicity
conjecture for pairs of Dynkin diagrams is true.

The Inoue-Iyama-Keller-Kuniba-Nakanishi proof of Theorem \ref{th:Z2}
also used categorification, in particular the work of Plamondon 
\cite{Plamondon}.  Moreover, just as in the case of the Fomin-Zelevinsky
proof of Theorem \ref{th:Z2}, one crucial ingredient in their proof was
a ``tropical" version of Theorem \ref{th:Z2}.


\bibliographystyle{amsplain}
\bibliography{CA.bib}

\end{document}